\newtheorem{thm}{Theorem}[section]
\newtheorem{theorem}[thm]{Theorem}
\newtheorem{corollary}[thm]{Corollary}
\newtheorem{lemma}[thm]{Lemma}
\newtheorem{proposition}[thm]{Proposition}
\newtheorem{definition}[thm]{Definition}
\theoremstyle{remark}
\newtheorem{conjecture}[thm]{Conjecture}
\newtheorem{remark}[thm]{Remark}
\newtheorem{problem}[thm]{Problem}
\newcommand{\RR}{\mathbb R}
\newcommand{\cH}{\mathcal H}
\newcommand{\PP}{\mathcal P}
\newcommand{\QQ}{\mathcal Q}
\def\cW{{\mathcal W}}
\def\cT{{\mathcal T}}
\def\cN{{\mathcal N}}
\def\cP{{\mathcal P}}
\def\cR{{\mathcal R}}
\def\cSw{{\mathcal S}_{\cW}}
\def\cPw{{\mathcal P}_{\cW}}
\def\Sw{S_{\cW}}
\def\II{\mathbb I}
\def\<{\langle}
\def\>{\rangle}
\def\ds{\displaystyle}
\def\sp{\overline{sp}}
\def\tab{\ \ \ }
\def\be{\begin{equation}}
\def\ee{\end{equation}}
\title[Non-orthogonal Fusion Frames]
{Non-orthogonal fusion frames and the sparsity of fusion frame operators}
\author[Cahill, Casazza, Li]{Jameson Cahill,  Peter G. Casazza, Shidong Li}
\date{\empty}
\address{Cahill/Casazza, Department of Mathematics,
University of Missouri, Columbia, MO 65211-4100}
\address{Shidong Li, Department of Mathematics, San Francisco State University, 
San Francisco, CA 94132}
\email{jameson.cahill@gmail.com; casazzap@missouri.edu; shidong@sfsu.edu}
\begin{document}

\begin{abstract}
Fusion frames have become a major tool in the implementation of distributed systems.
The effectiveness of fusion frame applications in distributed systems is reflected in the efficiency of the end fusion process.  This in turn is reflected in the efficiency of the inversion of the fusion frame operator $S_{\cW}$, which in turn is heavily dependent on the sparsity of $S_{\cW}$.
We will show that sparsity of the fusion frame operator
naturally exists by introducing a notion of {\it non-orthogonal fusion frames}.
We show that for a fusion frame $\{W_i,v_i\}_{i\in I}$,
if $\text{dim}(W_i)=k_i$, then the matrix of the non-orthogonal fusion frame operator
$\cSw$ has in its corresponding location at most a $k_i\times k_i$ block matrix.   We provide necessary and sufficient conditions for which the new fusion frame operator $\cSw$ is diagonal and/or a multiple of an identity.  A set of other critical questions are also addressed.  A scheme of {\it multiple fusion frames} whose corresponding fusion frame operator becomes an diagonal operator is also examined.
\end{abstract}

\subjclass[2000]{94A12, 42C15, 68M10, 68Q85}
\date{April 18, 2010}
\keywords{Frames, Fusion Frames, Sparsity of the Fusion Frame Operator, Sensor Network, Data Fusion, Distributed Processing, Parallel Processing.}
\thanks{The first author is supported by NSF DMS 1008183; the
second author is supported by NSF DMS 1008183
DTRA/NSF 1042701, AFOSR F1ATA00183G003; The third author is supported by NSF DMS 0709384 and NSF DMS 1010058}

\maketitle
%
%

\section{Introduction}

Fusion frames were introduced in \cite{CK04} (under the name {\it frames of subspaces}) and \cite{CKL08}, and have quickly turned into an industry (see www.fusionframes.org).  Recent developments include applications to sensor networks \cite{CKLR07}, filter bank fusion frames
\cite{CFM10}, applications to coding theory \cite{BK09}, compressed sensing \cite{BKR09}, construction methods \cite{CCHKP10, CCHKP09,CFMWZ10,CFMWZ09,CFMWZ10A}, sparsity for fusion frames \cite{CHKK10}, and frame potentials and fusion frames \cite{MRS10}.
Until now, most of the work on fusion frames has centered on
developing their basic properties and on {\it constructing}
fusion frames with specific properties.  We now know that there are very few tight fusion
frames without weights.  For example, in \cite{CFMWZ10} the authors classify all
triples $(K,L,N)$ so that there exists a tight fusion frame $\{W_i\}_{i=1}^K$
with $dim\ W_i = L$, for all $i=1,2,\ldots,K$ in ${\mathcal H}_N$.

A major stumbling block
for the application of fusion frame theory is that in practice, we generally do not get to
{\it construct} the fusion frame, but instead it is thrust upon us by the application.
In a majority of fusion frame applications, such as in sensor network data processing, each sensor spans a fixed subspace $W_i$ of ${\mathcal H}$
generated by the spatial reversal and the translates of the sensor's impulse response function \cite{LiYan08}, \cite{LiYaoYi2008}. There is no opportunity then
for subspace transformation, manipulation and/or selection.  As a result, the fusion frame operator $S_{\cW}$ is always non-sparse with an extremely high probability.  The lack of sparsity of $\Sw$ is
a significant hinderance in computing $\S_W$
and its inverse, which is necessary to apply the theory.
So the central issue in the effective application of fusion frames is to have sparsity
for the fusion frame operator - preferably for it to be a diagonal operator.

We have long suspected that there has to be a way to ensure that $\Sw$ is no more than a block diagonal operator with each block having the dimension of the corresponding subspace.   It turns out that a notion of non-orthogonal fusion frames achieves that and this is the central theme of
this paper.

\section{Non-orthogonal fusion frames}

Nonorthogonal fusion frames are a modification of fusion frames \cite{CK04}, \cite{CKL08} with a sequence of non-orthogonal projections operators.  A non-orthogonal projection onto $W$ is a linear mapping
$\mathcal{P}_{W}$ from $\mathcal{H}$ onto $W$ which satisfies $\mathcal{P}_{W}^2=\mathcal{P}_{W}$.
An important property is that the adjoint $\mathcal{P}^{*}_{W}$ is also a non-orthogonal
projection from $\mathcal{H}$ onto $\mathcal{N}(\mathcal{P}_{W})^{\perp}$ with $W^{\perp}$
being the null space (of $\cP^*$).  Here
$\mathcal{N}(\mathcal{P}_{W})=\{f\in\mathcal{H}:\mathcal{P}_{W}f=0\}$.  Also observe that
we must have that $\mathcal{N}(\mathcal{P}_{W})\cap W=\{0\}$, \text{i.e.},
$\mathcal{N}(\mathcal{P}_{W})\oplus W=\mathcal{H}$.

\begin{definition}
Let $I$ be a countable index set.  Let $\{W_i\}_{i\in I}$ be a family of closed subspaces in $\cH$, and let $\{v_i\}_{i\in I}$ be a family of positive weighting scalers.  Denote by $\cP_i$ a non-orthogonal projection onto $W_i$.  Then $\{\cP_i, v_i\}_{i\in I}$ is a {\it non-orthogonal fusion frame} of $\cH=\overline{span}(\sum_{i\in I} W_i)$ if there are constants $0 < C\leq D < \infty$ such that
\begin{equation}\label{nonog-FF}
\forall f\in\cH, \ \ \ C\|f\|^2\leq \sum_{i\in I} v_i^2\|\cP_i f\|^2 \leq D\|f\|^2.
\end{equation}
\end{definition}

\noindent {\bf Remarks}: \
(1)  Throughout this paper we will use $\pi$ for an orthogonal projection.
It is obvious that if $\cP_i$ is an orthogonal projection $\pi_i$, then our notion
of a non-orthogonal fusion frame becomes the standard fusion frame.

(2)  In general, let $\ds(\sum \oplus W_i)_{l^2} \equiv \{\{f_i\} | f_i\in W_i \ \text{and}\  \|f_i\|\in l^2(I)\}$.
 Define the analysis operator $\ds{\cT}_{\cW} :\cH\rightarrow (\sum \oplus W_i)_{l^2}$ by
\[
  \cT_{\cW} f =\{v_i\cP_i f\}_{i\in I},\mbox{ for all } f\in \cH.
\]
Then
\[
 \cT_{\cW}^* f =\sum_{i\in I} v_i\left(\cP_i^*\cP_i\right) f,\mbox{ for all }f\in \cH.
\]

The new (non-orthogonal) fusion frame operator $\cSw :\cH \rightarrow \cH$ becomes
\[
 \cSw \equiv \cT_{\cW}^*\cT_{\cW} = \sum_{i\in\II} v_i^2\cP_i^*\cP_i.
\]
We compare this to the standard fusion frame operator
\[
 \cSw \equiv \cT_{\cW}^*\cT_{\cW} = \sum_{i\in\II} v_i^2\pi_i.
\]
It is also true that the non-orthogonal fusion frame condition (\ref{nonog-FF}) is equivalent to that
\[
 C\,Id \leq \cSw  \leq D\, Id
\]

(3)  \ If the standard (orthogonal) fusion frame (OGFF) condition \cite{CK04}, \cite{CKL08} holds, there will be no loss of information with non-orthogonal projection operators.   Instead, there are infinitely many flexibilities
now available which is highly beneficial to the sparsity of $\cSw$ as we demonstrate next.

(4)  Oftentimes, subspaces $\{W_i\}$ are given a priori by applications.  Subspace manipulation does not exist nor is allowed in those applications.  As a result, the fusion frame operator $\Sw$ given by the orthogonal projections are fixed, and are non-sparse with probability nearly 1.  For instance, in $\RR^3$, let $\ds W_1=\{z=0\}$, and $\ds W_2=\{x+y+z=0\}$ be two planes.   $\Sw$ by the OGFF definition \cite{CK04}, \cite{CKL08} $\ds\Sw = \pi_1+\pi_2 $ gives rise to a full matrix with no zero entry.

If on the other hand, if we take $\ds \cP_1=\pi_1$, but let $\ds \cP_2$ be the non-orthogonal
projection with $\ds\cN(\cP_2)=\{z=0\} \cap \{y=0\}$ so that
\[
\cP_2 =\left(\begin{array}{ccc}
  0 & -1 & -1 \\
  0 & 1 & 0 \\
  0 & 0 & 1\end{array}\right),
\]

then $\cP_2^*\cP_2$ is a $2\times 2$ block matrix
\[
\cP_2^*\cP_2 =\left(\begin{array}{ccc}
  0 & 0 & 0 \\
  0 & 3 & 1 \\
  0 & 1 & 2\end{array}\right).
\]

Also, the corresponding non-orthogonal fusion frame operator $\cSw$ has the standard matrix representation
\[
\cSw =\pi_1 + \cP_2^*\cP_2 =\left(\begin{array}{ccc}
 1 & 0 & 0 \\
 0 & 3 & 1 \\
 0 & 1 & 2\end{array}\right),
\]
which is now a relatively sparse representation - already
much better than that of the orthogonal projections.
\smallskip
\smallskip

\noindent{\bf Diagonal $\cSw$}:  \ \ One can achieve more in this example with non-orthogonal projections.  Say, if we take $\ds \cN(\cP_1)=span\{e_2 +e_3\}$.  Then $\ds \cN(\cP_1)\cap W_1={0}$, and
\[
\cP_1 =\left(\begin{array}{ccc}
  1 & 0 & 0 \\
  0 & 1 & -1 \\
  0 & 0 & 0\end{array}\right).
\]
Then
\[
\cP_1^*\cP_1 =\left(\begin{array}{ccc}
  1 & 0 & 0 \\
  0 & 1 & -1 \\
  0 & -1 & 1\end{array}\right).
\]
Consequently,
\[
\cSw =\cP_1^*\cP_1 + \cP_2^*\cP_2 =\left(\begin{array}{ccc}
 1 & 0 & 0 \\
 0 & 3 & 0 \\
 0 & 0 & 3\end{array}\right),
\]
which yields a diagonal non-orthogonal fusion frame operator $\cSw$.   This situation is highly beneficial to all fusion frame applications.

(5) \ Suppose that fusion frames are used in sensor network applications.  Each subspace $W_i$ represents a sensor.   The measurement of each sensor is a typical frame expansion $\{\< f, w_n\>\}$ \cite{LiYan08}.  Therefore, not only the subspaces $\{W_i\}$ are fixed by the sensors in the network, but also the sensor measurements are given a priori.  So diagonalizing $S_{\cW}$ through subspace transformations and/or rotations are not permitted.

Fortunately, non-orthogonal projections make use of the given sensor measurement (or the sensory frame expansion) precisely and naturally using the notion of pseudoframes for subspaces (PFFS) \cite{LiOgPFFS98}.  In Section \ref{secPFFS} we consider the implementation of non-orthogonal projections $\cP_i$ using PFFS.

(6) Relating to the notion of nonorthogonal fusion frames,
there is the notion of g-frames \cite{sun2006g}, \cite{sun2007stability}.  Actually,
g-frames are more general classes of ``operator frames''.  Though nonorthogonal fusion frames are a class of g-frames with projection operators, the study of this (nonorthogonal) projection class has never been carried out, and the restriction to (nonorthogonal) projection operators also makes the analysis less flexible than that of the general operator frames.  Yet, it is this class of projection operators that actually find realistic applications in sensor array or distributed system data fusion.  Because projection operators really have the physical interpretation, namely, signals measured by sensors are really projections of the original signal/function onto the subspace W spanned by the sensor. Linear measurements of a signal by sensors and/or linear devices are typically modeled by an orthogonal projection operator.  Sensors and/or linear devices can also function in a nonorthogonal way, the principle of which is discussed in detail in {\em Section} \ref{secPFFS}.

Our work here has also led to a synthesis of positive and self-adjoint operator $T$ by projections $\cP_{ij}$ in the form of $\sum_{ij}v_i^2\cP^*_{ij}\cP_{ij}$.  These ideas will be developed in a later article.

\section{Main Problem Statements}

We list here some of the problems needed to be resolved in the topic of non-orthogonal fusion frames.  In this article, we provide solutions to several of these problems.

\begin{problem}[Main Problem]  \label{prob1} Given subspaces $\{W_i\}_{i\in I}$ of
$\cH_N$ which span $\cH_N$, does there exist a family of non-orthogonal
projections $\{\PP_i\}_{i\in I}$ with $\PP_i$ mapping onto $W_i$ so that
\[ \sum_{i\in I}\PP_i^*\PP_i = \lambda I?\]
Alternatively,
\[ \sum_{i\in I}\PP_i^*\PP_i =D,\ \ \mbox{$D$ a diagonal operator}?\]
\end{problem}

\begin{conjecture}\label{conj1}
We believe that Problem \ref{prob1} has a negative answer with strict diagonal right hand sides.  But, sparsity to certain degree is always achievable.
\end{conjecture}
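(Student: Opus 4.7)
The plan is two-fold, corresponding to the two clauses of the conjecture. For the negative part, I would seek an explicit family of subspaces $\{W_i\}_{i=1}^k$ in a low-dimensional $\cH_N$ for which a parameter count shows that the system $\sum_i \cP_i^*\cP_i = D$ (diagonal, with $D$ free) cannot be satisfied. A non-orthogonal projection $\cP_i$ onto $W_i$ with $\dim W_i = k_i$ is determined by its kernel, an $(N-k_i)$-dimensional subspace transverse to $W_i$, giving $k_i(N-k_i)$ real parameters. Imposing the vanishing of the $\binom{N}{2}$ off-diagonal entries of $\sum_i \cP_i^*\cP_i$ gives a polynomial system whose ideal I would analyze. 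Showing that this ideal has no real solution for a generic (or carefully chosen) configuration $\{W_i\}$ is precisely the negative part of the conjecture.

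For the positive (sparsity) clause, I would argue constructively. Because $\cP_i^*\cP_i$ is positive semidefinite of rank $k_i$ with range $\cN(\cP_i)^\perp$, and because we are free to pick $\cN(\cP_i)$ subject only to $\cN(\cP_i)\cap W_i = \{0\}$, we can aim the range of $\cP_i^*\cP_i$ at a coordinate-aligned $k_i$-flat. This confines the matrix of $\cP_i^*\cP_i$ to a single principal $k_i \times k_i$ block, recovering the qualitative sparsity statement already announced in the abstract. To push further toward ``sparsity to a certain degree,'' I would schedule the kernel choices greedily so that the supports of the individual $k_i\times k_i$ blocks overlap as little as possible; this reduces to a combinatorial covering problem on $\{1,\dots,N\}$ with set sizes $k_i$, and yields a quantitative bound on the maximum number of nonzero off-diagonal entries of $\cSw$.

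The main obstacle is the first part: turning a parameter count into a genuine nonexistence proof. The authors' own Remark following the definition shows that in $\RR^3$ one can diagonalize $\cSw$ for a nontrivial pair of subspaces, so the counterexample must be chosen subtly. The hardest technical step will be ruling out \emph{all} admissible kernel choices simultaneously, not merely the obvious ones. I expect the cleanest obstruction to come from trace identities: $\tr(\cP_i^*\cP_i) \geq \dim W_i$ with equality exactly for orthogonal projections, so imposing $\sum_i \cP_i^*\cP_i = \lambda I$ fixes $\sum_i \tr(\cP_i^*\cP_i) = \lambda N$ and severely constrains how far each $\cP_i$ may deviate from $\pi_i$. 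Combining this with the off-diagonal constraints should reduce the strict $\lambda I$ case to the already-known obstructions for orthogonal fusion frames (e.g., the $(K,L,N)$ classification of \cite{CFMWZ10}), thereby producing the desired counterexample.
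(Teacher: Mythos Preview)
The statement you are trying to prove is labeled a \emph{Conjecture} in the paper, and the paper does not supply a proof of its first clause.  The authors state only that they ``believe'' Problem~\ref{prob1} has a negative answer; no counterexample, parameter count, or obstruction argument appears anywhere in the text.  So there is no ``paper's own proof'' of the negative part to compare against---your proposal there is a research plan for something the authors leave open.

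For the second clause (``sparsity to a certain degree is always achievable''), the paper does give an argument, namely Proposition~\ref{prop_kxk}: choose $\cN(\cP_i)$ to be a coordinate subspace $\operatorname{span}\{e_j\}_{j\in K'}$ complementary to $W_i$, which forces $\cP_i^*\cP_i$ to be supported on a single $k_i\times k_i$ principal block.  This is exactly the mechanism you describe in your second paragraph, so on that part your approach and the paper's coincide.

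Regarding your sketch for the negative clause: the trace inequality $\tr(\cP_i^*\cP_i)\ge \dim W_i$ (with equality iff $\cP_i$ is orthogonal) is correct, but the final step---``reducing the strict $\lambda I$ case to the already-known obstructions for orthogonal fusion frames''---does not follow from it.  The constraint $\sum_i \tr(\cP_i^*\cP_i)=\lambda N$ only gives $\lambda\ge N^{-1}\sum_i k_i$; it does not force each $\cP_i$ to be orthogonal unless you happen to sit at the extremal value of $\lambda$, and even then the orthogonal $(K,L,N)$ classification of \cite{CFMWZ10} concerns equal-dimensional, weight-one families and says nothing about whether a \emph{non}-orthogonal solution exists at larger~$\lambda$.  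Indeed the paper's own $\RR^3$ example (Remark~(4) after Definition~2.1) and Proposition~\ref{prop3} show that tight non-orthogonal fusion frames exist in configurations where orthogonal ones do not, so the reduction you propose cannot be valid in general.  A genuine proof of the negative clause would have to exhibit specific subspaces and rule out \emph{all} kernel choices directly, which your parameter-count idea might accomplish but which the trace argument alone does not.
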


Since non-orthogonal projections onto a given subspace are no longer unique, the following problem is very natural, and likely to have a positive answer.

\begin{problem}\label{prob2}
Given subspaces $\{W_i\}_{i\in I}$ of
$\cH_N$ which span $\cH_N$, do there exist multiple non-orthogonal
projections $\{\PP_i\}_{i\in I}$ and $\{\QQ_i\}_{i\in I}$ and weights $\{v_i\}_{i\in I}$ and $\{w_i\}_{i\in I}$  with $\PP_i,\QQ_i$ mapping onto $W_i$ so that
\[ \sum_{i\in I}(v_i^2\PP_i^*\PP_i+w_i^2\QQ_i^*\QQ_i) = \lambda I?\]
Or perhaps some number of projections - which should not be too large.
\end{problem}

\begin{remark}\label{rem1}
Problem \ref{prob2} has a positive answer with $v_i=1$ for every $i\in I$ if the subspaces all have dimension
$\ge \frac{N}{2}$.  We show this in Proposition \ref{prop3}.
\end{remark}

Since for every subspace $W$, either $W$ or $W^{\perp}$ (or alternatively, $(I-\PP)W$
for any projection $\PP$ onto $W$) has dimension $\ge \frac{N}{2}$, it would be interesting to solve the next problem.

\begin{problem}\label{prob3}
Given subspaces $\{W_i\}_{i\in I}$, weights $\{v_i\}_{i\in I}$, and projections $\{\PP_i\}_{i\in I}$ onto the
$W_i$ satisfying
\[ \sum_{i\in I}v_i^2\PP_i^*\PP_i = \lambda I,\]
does there exist projections $\{\QQ_i\}_{i\in I}$ onto $\{W_i^{\perp}\}_{i\in I}$
(or onto $\{(I-\PP_i)\cH_N\}_{i\in I}$) and weights $\{w_i\}_{i\in I}$ so that
\[ \sum_{i\in I}w_i^2\QQ_i^*\QQ_i = \mu I?\]
\end{problem}


\section{Block diagonal characterization of the non-orthogonal projection $\cP$}\label{section1}

We first show that every subspace $W$ with dim $W=k$
has a projection $\PP_W$ onto it for which the
matrix of $\PP_W^*\PP_W$ is a $k\times k$ block matrix.

\begin{proposition} \label{prop_kxk}
Let $W$ be a $k$-dimensional subspace of $\cH_N$.  Then there is a
subset $K\subset \{1,2,\ldots,N\}$ with $|K|=k$ and a projection $\PP_W$
onto $W$ so that the matrix of $\PP_W$ has non-zero entries only
on the entries of $K\times K$.
\end{proposition}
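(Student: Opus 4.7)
My plan is to build the projection $\PP_W$ in two stages: first locate a suitable coordinate set $K$, then build a projection adapted to it.

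Stage one: take any basis $w_1,\dots,w_k$ of $W$ and assemble them as the columns of an $N\times k$ matrix $M$. Since $\rk M = k$, a Gauss--Jordan argument on the rows of $M$ produces a subset $K\subset\{1,\dots,N\}$ of size $k$ such that the submatrix $M_K$ formed by rows $K$ is invertible. Replace the basis by the columns of $\tilde M := MM_K^{-1}$; the new basis vectors $\tilde w_1,\dots,\tilde w_k$ of $W$ satisfy $\pi_K\tilde w_j = e_{K(j)}$, where $\pi_K$ denotes the orthogonal projection onto $\mathrm{span}\{e_i:i\in K\}$. Equivalently, the rows of $\tilde M$ indexed by $K$ form the identity $I_k$.

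Stage two: invertibility of $M_K$ also gives $W\cap V=\{0\}$ for $V:=\mathrm{span}\{e_j:j\in K^c\}$, so $\cH_N = W\oplus V$ by dimension count. Let $\PP_W$ be the (non-orthogonal) projection with range $W$ and kernel $V$. By construction $\PP_W e_j = 0$ for $j\in K^c$, so every column of the matrix of $\PP_W$ with index outside $K$ vanishes. On the columns indexed by $K$, the entries in rows $K$ form the identity block $I_k$, by stage one.

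The crux of the proposition -- and in my view the main obstacle -- is the remaining assertion that the entries in the $K^c\times K$ block also vanish. After the normalization $M_K = I_k$, the rows of $\tilde M$ indexed by $K^c$ record how the coordinate lifts of $e_{K(j)}$ into $W$ spill out of $\mathrm{span}\{e_i:i\in K\}$, and a priori these spillover entries are nonzero. To drive them to zero, the argument must exploit an additional refinement -- either by choosing a different kernel for $\PP_W$ along a non-coordinate complement that absorbs the spillover, or by performing a finer selection of $K$ tied to an orthogonality structure between $W$ and the coordinate axes -- so that the $K^c$-rows of $\tilde M$ collapse. It is at this final step that the genuine $K\times K$ support for the matrix of $\PP_W$ itself is established, and I expect the real content of the proof to live here rather than in the routine rank-selection step.
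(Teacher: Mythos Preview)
Your stages one and two are exactly the construction the paper uses: the paper writes $\PP_W=(\pi_K|_W)^{-1}\pi_K$, which is precisely your projection with range $W$ and kernel $\mathrm{span}\{e_j:j\in K^c\}$. Where you diverge is in the final step, and the divergence comes from reading the statement too literally.

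The $K^c\times K$ block of $\PP_W$ that you flag as the ``main obstacle'' is not driven to zero in the paper --- and it cannot be. If the matrix of $\PP_W$ itself were supported in $K\times K$, then its range $W$ would lie inside $\mathrm{span}\{e_i:i\in K\}$, forcing $W$ to \emph{equal} that coordinate subspace. So for a generic $W$ the literal assertion is simply false, and the ``additional refinement'' you anticipate does not exist.

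What the paper actually establishes (see the last three lines of its proof, and the sentence introducing the proposition) is that $\PP_W^*\PP_W$ has support in $K\times K$; the proposition is imprecisely phrased. For that conclusion your construction already suffices: since $\PP_We_j=0$ for $j\notin K$, one has
\[
\langle \PP_W^*\PP_W e_i,e_j\rangle=\langle \PP_We_i,\PP_We_j\rangle=0
\]
whenever $j\notin K$, and self-adjointness of $\PP_W^*\PP_W$ then kills the remaining off-$K$ rows. No finer selection of $K$ or alternative kernel is needed; the step you expected to be the crux is this one-line symmetry observation.
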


\begin{proof}
Let $\{e_i\}$ be the standard orthonormal basis of $\cH_N$.
Given an orthonormal basis $\{x_i\}_{i=1}^k$ for $W$, if we row reduce it, we
will find a set $K\subset \{1,2,\ldots,N\}$ with $|K|=k$ so that the restriction of the operator
$\pi_K:W\rightarrow V_K = span\ \{e_i\}_{i\in K}$ is invertible on $V_K$.  Define a mapping
\[
\PP_W = \left ( \pi_K|_{V}\right )^{-1}\pi_K.
\]
Then $\PP_W$ is a projection onto $W$.  Also, $\PP_We_j = 0$ if $j\notin K$
implies that for $j\notin K$ we have for all $i$:
\[
\langle \PP_W^*\PP_W e_i,e_j\rangle = \langle \PP_We_i,\PP_We_j\rangle
= \langle \PP_We_i,0\rangle =0.
\]
So the only non-zero entries in the matrix of $\PP_W^*\PP_W$ are the entries
from $K\times K$.
\end{proof}

An alternative argument of the proof goes as follows.   Since $dim(W)=k$, one can always find a set $K^{'}\subset \{1, 2, \ldots,N\}$ with $|K^{'}|=N-k$ such that $W^{'}\equiv span\{e_j\}_{j\in K^{'}}$ complements $W$, i.e., $W^{'}\cap W =\{0\}$ and $W+W^{'}=\cH_N$.   Now, set the null space of $\cP_W$ to be $\cN(\cP_W)=W^{'}$.  Then $\cP_W e_j =0$ for all $j\in K^{'}$.   The rest follows by the last 3 lines of the previous proof.   Note, there are consequently $N-k$ columns of zeros in the matrix of $\cP_W$ with respect to the orthonormal basis $\{e_j\}$.

\smallskip
\smallskip
\smallskip

In fact, more can be said about the sparsity of $\cP_W$.

\begin{proposition}
Let $\mathcal{H}$ be Hilbert space with orthonormal basis $\{e_i\}_{i=1}^N$.  Then for every subspace $W\subseteq\mathcal{H}$, there exists a projection $\mathcal{P}_W$ such that the matrix of $\mathcal{P}_W$ is triangular with respect to $\{e_i\}_{i=1}^N$.
\end{proposition}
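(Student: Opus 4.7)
The plan is to triangularize by exploiting the flag associated with the ordered standard basis. Let $k=\dim W$, set $V_0=\{0\}$ and $V_j=\mathrm{span}\{e_1,\ldots,e_j\}$ for $j=1,\ldots,N$. As $j$ grows, the integer sequence $d_j=\dim(W\cap V_j)$ is nondecreasing, increases by at most $1$ at each step, starts at $0$ and ends at $k$. Let $J=\{j_1<j_2<\cdots<j_k\}$ be the set of indices where $d_j$ strictly increases. This $J$ is the index set I will use to declare where the projection acts nontrivially.

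Next I would build a carefully normalized basis of $W$ adapted to this flag. For each $i$ pick any vector in $(W\cap V_{j_i})\setminus V_{j_i-1}$; its $e_{j_i}$-coefficient is nonzero, so after rescaling and then Gaussian-eliminating against the earlier basis vectors I can produce $x_1,\ldots,x_k$ of the form
\[
x_i \;=\; e_{j_i} \;+\; \sum_{\ell<j_i,\;\ell\notin J} c_{i,\ell}\,e_\ell, \qquad i=1,\ldots,k,
\]
so that $\{x_i\}$ is a basis of $W$, each $x_i\in V_{j_i}$, and the $e_{j_\ell}$-coordinate of $x_i$ is $\delta_{i\ell}$. This shape is the key: the $x_i$ are supported on $\{1,\ldots,j_i\}$, and their coordinates on $J$ form the $k\times k$ identity.

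Now I define $\mathcal{P}_W$ by $\mathcal{P}_W e_{j_i}=x_i$ for $i=1,\ldots,k$ and $\mathcal{P}_W e_j=0$ for $j\notin J$, extended linearly. Its range is clearly $W$. Idempotence is immediate from the adapted basis: since every $e_\ell$ appearing in $x_i$ with $\ell\ne j_i$ has $\ell\notin J$, we get $\mathcal{P}_W x_i = \mathcal{P}_W e_{j_i} + \sum c_{i,\ell}\mathcal{P}_W e_\ell = x_i$, so $\mathcal{P}_W^2=\mathcal{P}_W$ on the basis $\{x_i\}$ of $W$, hence on all of $\mathcal{H}$. Finally, column $j$ of the matrix of $\mathcal{P}_W$ in the basis $\{e_i\}$ is $\mathcal{P}_W e_j$, which is either $0$ (if $j\notin J$) or $x_i$ with support in $\{1,\ldots,j_i\}=\{1,\ldots,j\}$; in either case the column vanishes below row $j$, so the matrix is upper triangular.

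The only delicate point is step two, the simultaneous normalization of the basis: one has to verify that the natural Gaussian-elimination on any basis adapted to the flag produces vectors that are supported away from $J\cap\{1,\ldots,j_i-1\}$, which is what makes $\mathcal{P}_W$ idempotent on the nose. Everything else is bookkeeping. (Of course, by reversing the order of the basis one obtains a lower triangular projection instead, so ``triangular'' here is meant in either sense.)
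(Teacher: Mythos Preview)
Your proof is correct. Both you and the paper build $\mathcal P_W$ by selecting a $k$-element index set, sending those $e_j$'s to an adapted basis of $W$, and annihilating the remaining $e_j$'s; in each case the resulting operator is $(\pi_K|_W)^{-1}\pi_K$ once the index set $K$ is fixed. The substantive difference is in how that set is chosen.

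The paper takes \emph{any} $K=\{i_1<\cdots<i_k\}$ for which $\pi_K|_W$ is a bijection, then manufactures an orthogonal basis $\{x_j\}$ of $W$ with $\pi_Kx_j\in\mathrm{span}\{e_{i_1},\dots,e_{i_j}\}$; its verification only controls the entries $\langle\mathcal P_We_i,e_\ell\rangle$ with $i,\ell\in K$ and the zero columns $i\notin K$. For a generic admissible $K$ the remaining entries (row in $K^c$, column in $K$) need not respect either triangular pattern---for instance $N=4$, $W=\mathrm{span}\{e_1+e_2,\,e_3+e_4\}$, $K=\{1,4\}$ produces a projection with nonzero $(2,1)$ and $(3,4)$ entries. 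Your flag choice $J=\{j:\dim(W\cap V_j)>\dim(W\cap V_{j-1})\}$ is exactly what pins this down: it forces each $x_i$ to lie in $V_{j_i}$, so every nonzero column is automatically supported in $\{1,\dots,j_i\}$ and the full matrix is upper triangular. The row-reduction you use (in place of the paper's Gram--Schmidt) also makes idempotence immediate, since the only $J$-coordinate surviving in $x_i$ is $j_i$ itself. What the paper's orthogonality buys, on the other hand, is a cleaner bridge to the subsequent discussion of when $\mathcal P_W^*\mathcal P_W$ can be diagonal; your basis is not orthogonal in general, so that link would need a separate argument.
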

\begin{proof}
Choose $K\subseteq\{1,...,N\}$, $|K|=k=\dim(W)$,
$K = \{i_1,i_2,\ldots,i_k\}$  and $V = span\ \{e_{i_j}\}_{j=1}^k$,
so that the orthogonal projection $\pi_V$ onto $V$ is a
bijection between $V$ and $W$.  We know there exists $x_1\in W$ so that $\pi_Vx_1=e_{i_1}$.  Let $W_1=\{w\in W:\langle w,x_1\rangle=0\}$ and $A_1=\{x\in W:\pi_Vw\in\text{span}\{e_{i_1},e_{i_2}\}\}$.  Then $A_1\cap W_1$ is a one dimensional subspace of $W$, so choose $x_2$ in this subspace.  Repeat inductively so that $\pi_Vx_j\in\text{span}\{e_{i_1},...,e_{i_j}\}$.  Now define $U:V\rightarrow W$ so that $U\pi_Vx_j=x_j$, and define $\mathcal{P}_W=U\pi_V$.  Then $\mathcal{P_W}x_j=U\pi_Vx_j=x_j$, so $\mathcal{P}_W^2=\mathcal{P}_W$.  Also, for $i\in K$ we have $e_i=\sum_{j=1}^ib_{ij}\pi_Vx_j$.  Therefore, for $i,\ell \in K$ with $\ell>i$ we have
$$
\langle\mathcal{P}_We_i,e_{\ell}\rangle=\sum_{j=1}^ib_{ij}\langle\pi_Vx_j,e_{\ell}\rangle=0.
$$
Also, if $i\notin K$ then $\mathcal{P}_W(e_i) = 0$ so for all $\ell$,
$$
\langle\mathcal{P}_We_i,e_{\ell}\rangle=0.
$$
\end{proof}

\noindent\textbf{Remark: }Since ${\mathcal P}_W^*{\mathcal P}_W$ is self adjoint, it is triangular if and only if it is diagonal.   Consequently, the triangular nature of $\cP_W$ may only result in $K\times K$ block diagonal nature in $\cP_W^*\cP_W$.  In Section \ref{sec_diag}, we will provide a characterization of when $\cP_W^*\cP_W$ can always be diagonal.

But first, let us examine an immediate consequence of the non-orthogonal fusion frame applied to conventional frames.   The evaluation of dual frames (to any conventional frames) becomes effortless.  There is a corresponding Parseval fusion fusion associated with any given conventional frame.
\smallskip
\smallskip

\noindent{\bf Example}\ \  (The case of conventional frames) \ \
Let $\{x_i\}_{i=1}^{M}$ be a conventional finite frame of $\cH_N$.  The following is immediate.

\begin{proposition}\label{prop_ConventionalFrame}
Let $\{x_i\}_{i=1}^M$ is a frame for $\mathcal{H}_N$ and let $W_i=\text{sp}\{x_i\}$.  Then there exists projections $\mathcal{P}_i$ onto $W_i$ and weights $v_i$ so that $\sum_{i=1}^Mv_i^2\mathcal{P}_i^*\mathcal{P}_i=I$.
\end{proposition}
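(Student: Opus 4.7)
The plan is to exploit that each $W_i = \text{sp}\{x_i\}$ is one-dimensional, so every non-orthogonal projection onto $W_i$ has rank one and must be of the form $\mathcal{P}_i f = \langle f, z_i\rangle x_i$ for some $z_i \in \cH_N$. The idempotent condition $\mathcal{P}_i^2 = \mathcal{P}_i$ reduces to the single normalization $\langle x_i, z_i\rangle = 1$, and a quick computation then gives $\mathcal{P}_i^* g = \langle g, x_i\rangle z_i$ and $\mathcal{P}_i^*\mathcal{P}_i f = \|x_i\|^2\langle f, z_i\rangle z_i$. Hence the target identity $\sum_i v_i^2 \mathcal{P}_i^*\mathcal{P}_i = I$ is equivalent to
\[
\sum_{i=1}^M v_i^2\|x_i\|^2\langle f, z_i\rangle z_i = f, \qquad f\in\cH_N.
\]

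Setting $u_i := v_i\|x_i\| z_i$, the displayed identity is exactly the statement that $\{u_i\}_{i=1}^M$ is a Parseval frame for $\cH_N$, while the idempotent constraint $\langle x_i, z_i\rangle = 1$ becomes $\langle x_i, u_i\rangle = v_i\|x_i\|$. So the whole proposition reduces to exhibiting a Parseval frame $\{u_i\}_{i=1}^M$ for $\cH_N$ with each $\langle x_i, u_i\rangle$ real and positive; once such a frame is in hand, one simply sets $v_i := \langle x_i, u_i\rangle/\|x_i\|$ and recovers $z_i$, hence $\mathcal{P}_i$, from the substitution.

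For the concrete construction I would take the canonical Parseval frame associated to $\{x_i\}$, namely $u_i := S^{-1/2} x_i$, where $S = \sum_{j=1}^M \langle \cdot, x_j\rangle x_j$ is the (invertible, positive definite) frame operator of $\{x_j\}$. A standard identity gives $\sum_{i=1}^M \langle f, u_i\rangle u_i = S^{-1/2} S\, S^{-1/2} f = f$, so $\{u_i\}$ is Parseval, and since $S^{-1/2}$ is positive definite, $\langle x_i, u_i\rangle = \langle x_i, S^{-1/2} x_i\rangle > 0$ for every $i$. Feeding this back through the reduction produces the required $v_i$ and $\mathcal{P}_i$.

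I do not expect a serious obstacle here: the only subtlety is that in the complex case a general Parseval frame could give $\langle x_i, u_i\rangle$ a non-positive complex value, but the Parseval property is invariant under $u_i \mapsto e^{i\theta_i} u_i$, so phases can always be rotated to make the inner products real and positive whenever they are nonzero. The canonical choice $u_i = S^{-1/2} x_i$ sidesteps this issue altogether, which is why I would use it.
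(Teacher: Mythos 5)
Your proof is correct, but it takes a genuinely different route from the paper's. You classify all rank-one projections onto $W_i=\mathrm{sp}\{x_i\}$ as $\mathcal{P}_i f=\langle f,z_i\rangle x_i$ with $\langle x_i,z_i\rangle=1$, observe that $\sum_i v_i^2\mathcal{P}_i^*\mathcal{P}_i=I$ is equivalent to $\{u_i\}=\{v_i\|x_i\|z_i\}$ being a Parseval frame with $\langle x_i,u_i\rangle=v_i\|x_i\|>0$, and then solve this with the canonical tight frame $u_i=S^{-1/2}x_i$, using positivity of $S^{-1/2}$ to guarantee $\langle x_i,S^{-1/2}x_i\rangle>0$. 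The paper instead specializes its Proposition 4.1 to $k=1$: it chooses each $\mathcal{P}_i$ to project along the span of all but one standard basis vector $e_{j_i}$, so that $\mathcal{P}_i^*\mathcal{P}_i$ has a single nonzero entry $r_i$ on the diagonal at position $j_i$, and then sets $v_i^2=\bigl(\sum_{i\in I_k}r_i\bigr)^{-1}$ where $I_k=\{i:j_i=k\}$. The paper's choice buys maximal sparsity of the summands (one nonzero matrix entry each), which is the theme of the paper, but its proof as written quietly needs every diagonal position to be covered, i.e.\ $I_k\neq\emptyset$ for all $k$; this requires the observation (supplied only in the constructive discussion following the proposition) that a spanning family can be enumerated so that $x_{jj}\neq 0$ for $1\le j\le N$. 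Your reduction is self-contained and has no such coverage gap, and your remark on phase rotation in the complex case is correct; the tradeoff is that it invokes $S^{-1/2}$ (precisely the computation the paper is trying to avoid) and produces dense rank-one matrices $\mathcal{P}_i^*\mathcal{P}_i=\|x_i\|^2\langle\cdot,z_i\rangle z_i$ rather than single-entry ones.
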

\begin{proof}
By Theorem 3.1 there exist projections $\mathcal{P}_i$ so that the matrix of $\mathcal{P}_i^*\mathcal{P}_i$ will have only one nonzero entry $r_i$, and this entry will be on the diagonal.  Let $j_i$ denote the position of $r_i$ in the matrix of $\mathcal{P}_i^*\mathcal{P}_i$.  Now for each $k=1,...,N$ let $I_k=\{i:j_i=k\}$ and let $v_i^2=(\sum_{i\in I_k}r_i)^{-1}$ for each $i\in I_k$.
\end{proof}

%

More specifically, let us also make this statement constructively.  Write $\ds x_i=\left(x_{ij}\right)^N_{j=1}$.  Assume that the index enumeration of the first $N$ column
vectors $\{x_i\}$ is such that $|x_{jj}| > 0$ for all $1\leq j\leq N$.
For better stability, we may also assume that the enumeration of the first $N$ vectors is such that, for a given $j$, $0< |x_{jj}|$ is the largest possible among all possible index permutations (such that $|x_{jj}| > 0$ for all $1\leq j\leq N$).

Let $\{e_i\}$ be the standard ONB of $\cH_N$.  For $1\leq i\leq N$, select $\cP_i$ such that
\be\label{Pi_1}
\cN(\cP_i)=\cN(e_i).
\ee
Write $\ds\bar x_i \equiv \frac{x_i}{x_{ii}}$.  Then
\be\label{Pi_dim1}
\cP_i=\left(\begin{array}{ccccccc}
0 & \cdots & 0 & \frac{x_{i1}}{x_{ii}} & 0 & \cdots & 0 \\
\vdots & \vdots & \vdots & \vdots & \vdots & \vdots & \vdots \\
0 & \cdots & 0 & \frac{x_{i i-1}}{x_{ii}} & 0 & \cdots & 0 \\
0 & \cdots & 0 & 1 & 0 & \cdots & 0 \\
0 & \cdots & 0 & \frac{x_{i i+1}}{x_{ii}} & 0 & \cdots & 0 \\
\vdots & \vdots & \vdots & \vdots & \vdots & \vdots & \vdots \\
0 & \cdots & 0 & \frac{x_{iN}}{x_{ii}} & 0 & \cdots & 0
             \end{array} \right), \ \ \ 1\leq i \leq N.
\ee
That is, the only nonzero vector $\bar x_i$ in $\cP_i$ is at the $i^{th}$ column.  This is because $(\cP_i e_i)_i=1$ and $\cP_i e_i =\alpha x_i$.  One can verify easily that such a $\cP_i$ is a projection.  As a result, for $1\leq i\leq N$,
\[
\left(\cP^*_i\cP_i\right)_{mn}=\left\{\begin{array}{ll}
  \|\bar x_i\|^2, & m=n=i \\
  0,              & \text{otherwise}.\end{array}\right.
\]
\smallskip

For $N+1\leq i\leq M$, let $j_i\in \{1,\cdots,N\}$ be such that $|x_{i j_i}|\geq |x_{i j}| >0 $ for all $1\leq j\leq N$ (assuming none of the vectors of $\{x_i\}$ is a zero vector).  Then, define
\be \label{Pi_2}
\cN(\cP_i)=\cN(e_{j_i}),
\ee
and write $\ds \hat x_i\equiv \frac{x_i}{x_{i j_i}}$.  Then $\cP_i$ has the same expression as in
(\ref{Pi_dim1}) with all zero columns but $\hat x_i$ at the $j_i^{th}$ column.
Therefore, for $N+1\leq i\leq M$,
\[
\left(\cP^*_i\cP_i\right)_{mn}=\left\{\begin{array}{ll}
  \|\hat x_i\|^2, & m=n=j_i \\
  0,              & \text{otherwise}.\end{array}\right.
\]

Evidently, the choice of $\cN(\cP_i)=\cN(e_{j_i})$ ($N+1\leq i\leq M$) will have some $\{e_j\}_{j=1}^{N}$ selected more than one time (together with the selection process for $1\leq i\leq N$).  Let $1\leq k\leq N$.  Define the index set $J_k\equiv \{i: j_i=k\}$ for all $N+1\leq i\leq M$.   We may now choose the value of the weights $v_i$ by, as an alternative to that seen in
the proof of Proposition \ref{prop_ConventionalFrame},
\be \label{v_i}
v_i^2=\left\{\begin{array}{ll}
\frac{1}{(|J_i|+1)\|\bar x_i\|^2}, & 1\leq i\leq N\\
\frac{1}{(|J_{j_i}|+1)\|\hat x_i\|^2}, & N+1\leq i\leq M, \ j_i\in \{1,\cdots,N\}
            \end{array}\right.
\ee
Then the new fusion frame operator
\[
\cSw=\sum_i v_i^2\cP_i^*\cP_i = Id.
\]

With such selections of non-orthogonal projections $\cP_i$ and the associated weights $v_i$, we have constructed a (non-orthogonal) Parseval fusion frame $\ds \left\{\cP_i, v_i^2\right\}$, where $v_i^2$
are as given in (\ref{v_i}).

\subsection*{The frame expansion via the Parseval fusion frame}

Let us now check what the corresponding frame expansion looks like in the previous example.  For this, we need to figure out the expression of $\cP_i$. We will use {\it pseudoframes for subspaces} (PFFS) \cite{LiOgPFFS98} as a tool.    Recall that PFFS is a frame-like expansion for a subspace $W$.  Specifically, let $W$ be a closed subspace of $\cH$.   Let $\{x_n\}\subseteq\cH$ and $\{\tilde x_n\}\subseteq\cH$ be two Bessel sequences (not necessarily in $W$).  We say $\{x_n\}$ and $\{\tilde x_n\}$ form a pair of PFFS for $W$ if
\begin{equation}\label{PFFS}
f =\sum_n\< f, x_n\> \tilde x_n\tab\text{for every }f\in {W}
\end{equation}

One important feature of PFFS is that (\ref{PFFS}) is the non-orthogonal projection $\cP_{W,\sp\{x_n\}^{\perp}}$ from $\cH$ onto $W$ along the direction $\sp\{x_n\}^{\perp}$ \cite{LiOgPFFS98}.

We also point out that if $\{w_n\}$ and $\{\tilde w_n\}$ are conventional frames of $W$, then, for any $\{z_n\}\subseteq W^{\perp}$,  the pseudoframe sequence $\{x_n\}$ is always given by \cite{LiOgPFFS98}, (see the details in Section \ref{secPFFS})
\[
    x_n=w_n + z_n.
\]
One especially useful implication of this characterization is that the direction of the projection $\sp\{x_n\}^{\perp}$ can be freely adjusted by choosing $\{x_n\}$ properly, which in turn can be accomplished by selecting an appropriate orthogonal sequence $\{z_n\}\subseteq W^{\perp}$.
%

Let us now show how PFFS is applied in this particular example.  Let $\{y_i\}$ be the corresponding PFFS sequence associated with the choice of $\cP_i$ in (\ref{Pi_1}) and (\ref{Pi_2}) (we will show how $\{y_i\}$ are constructed immediately later).  Then, for all $f\in \cH$ we have
\begin{eqnarray}
f &=& \cSw^{-1}\cSw f = \sum_i v_i^2\cP^*_i\cP_i f \nonumber \\
  &=& \sum_i v_i^2 \cP^*_i\left(\< f, y_i\> x_i\right) \nonumber \\
  &=& \sum_i v_i^2 \< f, y_i\>\< x_i,x_i\> y_i  \nonumber \\
  &=& \sum_i v_i^2 \|x_i\|^2 \< f, y_i\> y_i  \label{step1}
\end{eqnarray}

\subsection*{The determination of the sequence $\{y_i\}$}

Since $W_i=\sp\{x_i\}$, the canonical dual frame of $x_i$ in $W_i$ is $x_i/\|x_i\|^2$.  Hence
\[
  y_i =\frac{x_i}{\|x_i\|^2} +z_i,
\]
where $z_i\in W_i^{\perp} =\sp\{x_i\}_{i=i}^{\perp}$.  The choice of $z_i$ must also simultaneously satisfy (\ref{Pi_1}) or (\ref{Pi_2}) (depending on the value range of $i$).

That $z_i\in\sp\{x_i\}_{i=i}^{\perp}$ suggests $z_i=(z_{ik})_k$ must be in the co-dimension 1 subspace,
\be\label{z_i_in_Wi_perp}
x_{i1}z_{i1}+x_{i2}z_{i2}+\cdots +x_{iN}z_{iN} =0 .
\ee

Also,
(\ref{Pi_1}) and (\ref{Pi_2}) further requires that $\cN(\cP_i)=\sp\{y_i\}_{i=i}^{\perp} = \cN(e_i)$
 (for $1\leq i\leq N$) or $\cN(\cP_i) =\sp\{y_i\}_{i=i}^{\perp} = \cN(e_{j_i})$ (for $N+1\leq i\leq M$).
 These suggest that, when $1\leq i\leq N$,
\be \label{Pi_1_direction}
    y_{ik}=0, \tab \forall k\neq i,
\ee
and when $N+1\leq i\leq M$,
\be \label{Pi_2_direction}
    y_{ik}=0, \tab \forall k\neq j_i,
\ee
where $j_i\in \{1,\cdots,N\}$ is as seen in the previous discussion.
Equations (\ref{z_i_in_Wi_perp}), (\ref{Pi_1_direction}) and (\ref{Pi_2_direction}) give rise to
\[
z_{ik}=\left\{\begin{array}{ll}
 -\frac{x_{ik}}{\|x_i\|^2}, & k\neq i \\
  \frac{\sum_{j\neq i}x_{ij}^2}{x_{ii}\|x_i\|^2}, & k=i\end{array}\right. \ \ i=1, \cdots, N,
\]
and
\[
z_{ik}=\left\{\begin{array}{ll}
 -\frac{x_{ik}}{\|x_i\|^2}, & k\neq j_i \\
  \frac{\sum_{j\neq j_i}x_{ij}^2}{x_{ij_i}\|x_i\|^2}, & k=j_i,\end{array}\right. \ \ i=N+1,\cdots , M.
\]
We have therefore,
\[
y_i =\left(0,\cdots,0,\frac{1}{x_{ii}},0,\cdots,0  \right)^H, \ \ i=1, \cdots, N,
\]
where $\ds \frac{1}{x_{ii}}$ appears in the $i^{th}$ position, and $x^H$ stands for Hermitian transpose of $x$.  Likewise,
\[
y_i =\left(0,\cdots,0,\frac{1}{x_{ij_i}},0,\cdots,0  \right)^H, \ \ i=N+1, \cdots, M,
\]
where $\ds \frac{1}{x_{ij_i}}$ appears in the $j_i^{th}$ position.  These are certainly consistent with the matrix representation of $\cP_i$ as in, e.g., (\ref{Pi_dim1}).

We see that the PFFS sequence $\{y_i\}$ in this example of Parseval fusion frames of 1-dimensional subspaces is really a multiple of the orthonormal basis
 $\{e_i\}$ with possible repeats of some elements of $\{e_i\}$'s.

\begin{remark}
(1)    Let us now check
 the frame expansion and reconstruction with the given frame $\{x_n\}$.  Bringing back the non-orthogonal Parseval fusion frame expansion (\ref{step1}), with  $v_i^2$'s as given in (\ref{v_i}),
 we have for all $f\in \cH$,
\begin{eqnarray}
f &=& \sum_i v_i^2\|x_i\|^2 \<f, y_i\> y_i \nonumber \\
  &=& \sum_i v_i^2 \|x_i\|^2 \left\< f, \frac{x_i}{\|x_i\|^2} + z_n\right\> y_i \nonumber \\
  &=& \sum_i v_i^2 \< f, x_i\> y_i.   \label{step2}
\end{eqnarray}
We see that there is no need to calculate dual frames!  With coefficients expanded by the frame sequence $\{x_n\}$, a (pseudo) dual sequence for the reconstruction is the $\{y_i\}$ constructed above.  The only computation is about $v_i^2$ and one non-zero reciprocal in each $y_i$.  Besides the proper weighting factor, this expansion becomes a linear combinations of weighted (and possible repeated) orthonormal basis elements $\{e_i\}$.

(2)   Suppose this is for (sensor) signal measurement applications.  It is important to point out that the PFFS implementation still makes use of the given analysis coefficients $\{\< f, x_i\>\}$ (or sensor measurements in practical applications).
In particular, if $f\in \sp\{x_n\}$, $\{\< f, y_i\>\}=\{\< f, x_i\> \}$.  This is also a critical point in this non-orthogonal fusion frame - that the non-orthogonal projection does not require additional measurements in case the signal measured is contained in the sensor subspace.

(3)   In light of (\ref{step1}), given any frame $\{x_n\}$, the frame expansion (\ref{step2}) we constructed here is also equivalent to a related Parseval frame expansion consisting of weighted and repeated
orthonormal basis elements.   To see this, we bring back (\ref{step1}) and (\ref{step2})
and compute for all $f\in \cH$,
\begin{eqnarray*}
f &=& \sum_i v_i^2 \< f, x_i\> y_i  \\
  &=& \sum_i v_i^2\|x_i\|^2\< f, y_i\> y_i \\
  &=& \sum_i \< f, v_i\|x_i\|y_i\> v_i\|x_i\|y_i,
\end{eqnarray*}
which is a Parseval frame expansion whose elements $\{u_i\}$ are weighted and possibly repeated
orthonormal basis elements, namely $\ds u_i=\frac{v_i\|x_i\|}{x_{ii}}e_i$ for $1\leq i\leq N$ and
$\ds u_i=\frac{v_i\|x_i\|}{x_{ij_i}}e_{j_i}$ for $N+1\leq i\leq M$.   If we bring
 the choice of $v_i^2$ of (\ref{v_i}) back into the picture, it becomes clear that
\[
u_i=\left\{\begin{array}{ll}
\frac{1}{\sqrt{|J_i|+1}}e_i, & 1\le i\le N \\
\frac{1}{\sqrt{|J_{j_i}|+1}}e_{j_i}, & N+1\le i\le M, \ j_i\in \{1,\cdots,N\}.
           \end{array}\right.
\]
Consequently, the frame expansion (\ref{step2}) is equivalent to a Parseval frame expansion with element $u_i$'s being orthonormal basis elements, repeated and thereby correctly scaled.

Note that there are many different such Parseval frames $\{u_i\}$, with different repetition factors, judging from the derivation process.

\end{remark}


\section{Diagonal characterization of $\cP_W^*\cP_W$} \label{sec_diag}

In a simplified version of Problem \ref{prob1}, we consider in this section the conditions for which one individual $\cP_W^*\cP_W$ can be diagonal.

\begin{proposition}\label{2}
Fix an orthonormal basis $\{e_i\}_{i=1}^N$ for $\mathcal{H}$.  Let $W\subseteq\mathcal{H}$ be a $k-$dimensional subspace.  The following are equivalent:
\begin{itemize}
\item[(i)] There exists a projection $\mathcal{P}_{W}$ such that the matrix of $\mathcal{P}_W^*\mathcal{P}_W$ is diagonal with respect to $\{e_i\}_{i=1}^N$.
\item[(ii)] There exists a subset $K\subseteq\{1,2,...,N\}$, $|K|=k$ such that there exists an orthogonal basis $\{x_i\}_{i\in K}$ for $W$ such that $x_i(j)=\delta_{ij}$ for $i,j\in K$.
\item[(iii)] There exists a projection  $\mathcal{Q}_W$ such that $\{\mathcal{Q}_We_i\}_{i\in K}$ is an orthogonal basis for $W$ and $\mathcal{Q}_We_i(j)=\delta_{ij}$ for $i,j\in K$.
\item[(iv)] There exists a projection $\mathcal{P}_W$ such that $\{\mathcal{P}_We_i\}_{i\in K}$ is an orthogonal basis for $W$.
\item[(v)] There exists a subset $K\subset \{1,2,\ldots,N\}$, $|K|=k$ and there exists an orthogonal
basis $\{x_i\}_{i\in K}$ for $W$ such that $\pi_Kx_i$ is an orthonormal basis for span $\{e_i\}_{i\in K}$.
\end{itemize}
Moreover, in all of the above cases, the diagonal elements of $\PP_W^*\PP_W$ are
$\|x_i\|^2$ for cases (ii) and (v);  $\|\QQ_We_i\|^2$ in case (ii), and $\|\PP_We_i\|^2$ for case (iv).
\end{proposition}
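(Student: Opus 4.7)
The plan is to prove the equivalences via a cycle (i) $\Rightarrow$ (ii) $\Rightarrow$ (iii) $\Rightarrow$ (iv) $\Rightarrow$ (v) $\Rightarrow$ (i). Throughout, the engine driving the argument is the idempotency relation $\mathcal{P}_W^2 = \mathcal{P}_W$ combined with the linear independence of an orthogonal basis of $W$.

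For (i) $\Rightarrow$ (ii): the matrix $\mathcal{P}_W^*\mathcal{P}_W$ is diagonal if and only if the columns $\{\mathcal{P}_W e_i\}$ are pairwise orthogonal, and since $\mathcal{P}_W$ has rank $k$, exactly $k$ columns are nonzero. Let $K$ index them and set $x_i := \mathcal{P}_W e_i$ for $i \in K$; the $\{x_i\}_{i \in K}$ form an orthogonal basis of $W$. Applying $\mathcal{P}_W^2 = \mathcal{P}_W$ to each $x_i \in W$ yields
\[
 x_i \;=\; \mathcal{P}_W x_i \;=\; \sum_{j \in K} x_i(j)\, \mathcal{P}_W e_j \;=\; \sum_{j \in K} x_i(j)\, x_j,
\]
and uniqueness of basis expansion forces $x_i(j) = \delta_{ij}$.

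The implication (ii) $\Rightarrow$ (iii) is by explicit construction: set $\mathcal{Q}_W e_i := x_i$ for $i \in K$ and $\mathcal{Q}_W e_j := 0$ for $j \notin K$. The Kronecker property $x_i(j) = \delta_{ij}$ is precisely what makes $\mathcal{Q}_W^2 = \mathcal{Q}_W$, since $\mathcal{Q}_W^2 e_i = \sum_{j \in K} x_i(j) x_j = x_i$. Then (iii) $\Rightarrow$ (iv) is immediate on taking $\mathcal{P}_W := \mathcal{Q}_W$.

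For (iv) $\Rightarrow$ (v), set $x_i := \mathcal{P}_W e_i$ for $i \in K$. Idempotency applied to $x_i \in W$ yields the same relation $x_i = \sum_{j \in K} x_i(j) x_j$ as in the (i) $\Rightarrow$ (ii) argument, so $x_i(j) = \delta_{ij}$; therefore $\pi_K x_i = e_i$, the standard orthonormal basis of $V_K$. For (v) $\Rightarrow$ (i), use the basis $\{x_i\}_{i \in K}$ (which, after identifying $\pi_K x_i$ with $e_i$, satisfies $x_i(j) = \delta_{ij}$) to define $\mathcal{P}_W e_i := x_i$ on $K$ and $0$ off $K$; the construction from (ii) $\Rightarrow$ (iii) shows this is a projection onto $W$, and the orthogonality of $\{x_i\}$ makes $\mathcal{P}_W^*\mathcal{P}_W$ diagonal with entries $\|x_i\|^2$, which also yields the ``moreover'' clause.

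The most delicate step is (iv) $\Rightarrow$ (v): one must argue that the idempotency of $\mathcal{P}_W$ together with the orthogonal-basis hypothesis on $\{\mathcal{P}_W e_i\}_{i \in K}$ is enough to force the Kronecker structure $x_i(j) = \delta_{ij}$ (the columns outside $K$ must not spoil the expansion in the basis $\{x_j\}_{j \in K}$). Once that is verified, everything else is either an explicit construction or an immediate consequence of the definition of a projection, so I would devote the bulk of the proof write-up to this step.
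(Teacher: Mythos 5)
Your first three links are sound, and your $(i)\Rightarrow(ii)$ is actually cleaner than the paper's: instead of routing through $\pi_V$ and the orthogonality-transfer observation, you get the Kronecker structure directly from $x_i=\mathcal{P}_Wx_i=\sum_{j\in K}x_i(j)x_j$ and uniqueness of expansion in the basis $\{x_j\}_{j\in K}$ (the key point, which you use correctly, is that diagonality forces $\mathcal{P}_We_j=0$ for $j\notin K$, so no extra terms appear). The construction for $(ii)\Rightarrow(iii)$ matches the paper's $U\pi_V$ argument, and $(iii)\Rightarrow(iv)$ is trivial.

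The cycle breaks exactly where you flagged it, and the gap is not fillable: $(iv)\Rightarrow(v)$ is left as an announced intention, and the claim you would need is false as $(iv)$ is literally stated, because $(iv)$ gives no control over $\mathcal{P}_We_j$ for $j\notin K$, so your identity becomes $x_i=\sum_{j\in K}x_i(j)x_j+\sum_{j\notin K}x_i(j)\mathcal{P}_We_j$ and the Kronecker conclusion evaporates. Concretely, take $W=\mathrm{span}\{(1,1,0),(0,1,1)\}\subset\mathcal{H}_3$ and let $\mathcal{P}$ be the projection onto $W$ along $\mathrm{span}\{(1,0,1)\}$; then with $K=\{1,2\}$ one computes $\mathcal{P}e_1=(\tfrac12,0,-\tfrac12)$ and $\mathcal{P}e_2=(\tfrac12,1,\tfrac12)$, which form an orthogonal basis of $W$, so $(iv)$ holds --- yet no projection onto this $W$ has diagonal $\mathcal{P}_W^*\mathcal{P}_W$ (diagonality would force $\mathcal{N}(\mathcal{P}_W)=\mathrm{span}\{e_m\}$ and $\mathcal{P}_We_i=e_i-c_ie_m$ with $c_i\overline{c_j}=0$, hence some $e_i\in W$, which fails here). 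The same $W$ with $x_1=\tfrac1{\sqrt2}(1,-1,-2)$, $x_2=\tfrac1{\sqrt2}(1,1,0)$ satisfies the literal statement of $(v)$, so your $(v)\Rightarrow(i)$ step is also irreparable as written: ``identifying $\pi_Kx_i$ with $e_i$'' silently replaces the fixed basis $\{e_i\}_{i\in K}$ by another orthonormal basis of $\mathrm{span}\{e_i\}_{i\in K}$, and diagonality is not preserved under that change. (The paper's own proof of $(v)\Rightarrow(i)$ has the same defect: the constructed $\mathcal{P}_W$ is not ``orthogonal when restricted to $\mathrm{span}\{e_i\}_{i\in K}$'' unless $\pi_Kx_i=e_i$.) The repair is to carry the extra data that the chain actually provides: in $(iv)$ one must additionally know $\mathcal{P}_We_j=0$ for $j\notin K$ (which the projection built in $(ii)\Rightarrow(iii)$ does satisfy, and which is how the paper's ``$(iii)\Rightarrow(iv)\Rightarrow(i)$ is clear'' should be read), and $(v)$ must be taken with $\pi_Kx_i=e_i$, i.e., as a restatement of $(ii)$, which is the route the paper takes via $(ii)\Rightarrow(v)$ and $(v)\Rightarrow(i)$. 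As it stands, your proposal proves $(i)\Leftrightarrow(ii)\Leftrightarrow(iii)$ and $(iii)\Rightarrow(iv)$, but does not close the cycle.
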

\begin{proof}
$(iii)\Rightarrow(iv)\Rightarrow(i)$ is clear.

We first show $(i)\Rightarrow(ii)$.  We know that $\langle\mathcal{P}_We_i,\mathcal{P}_We_j\rangle=0$ for $i\neq j$, so $\{\mathcal{P}_We_i\}_{i=1}^N$ is an orthogonal set.  But $\dim(W)=k$ so there exists a $K\subseteq\{1,...,N\}$ such that $\mathcal{P}_We_i=0$ for $i\not\in K$ and $\{\mathcal{P}_We_i\}_{i\in K}$ is an orthogonal basis for $W$.  Let $V=\text{span}\{e_i\}_{i\in K}$.  Observe that for $x,y\in V$
\begin{equation}\label{1}
\langle x,y\rangle=0 \mbox{ if and only if } \langle \mathcal{P}_Wx,\mathcal{P}_Wy\rangle=0.
\end{equation}
Now write
$$
\mathcal{P}_We_i=\pi_V\mathcal{P}_We_i+(I-\pi_V)\mathcal{P}_We_i
$$
to see that
$$
\mathcal{P}_W\mathcal{P}_We_i=\mathcal{P}_W\pi_V\mathcal{P}_We_i+\mathcal{P}_W(I-\pi_V)\mathcal{P}_We_i.
$$
But $\mathcal{P}_W(I-\pi_V)\mathcal{P}_We_i=0$, since $\mathcal{P}_We_i=0$ for $i\not\in K$.  Therefore, since $\mathcal{P}_W$ is a projection (\textit{i.e.}, $\mathcal{P}_W^2=\mathcal{P}_W$) we have that
$$
\mathcal{P}_We_i=\mathcal{P}_W\pi_V\mathcal{P}_We_i.
$$
Hence, (\ref{1}) now implies that $\{\pi_V\mathcal{P}_We_i\}_{i\in K}$ is an orthogonal basis for $V$.  Now observe that $\pi_V$ is a bijection between $V$ and $W$ so we can choose $\{x_i\}_{i\in K}$ so that $\pi_Vx_i=e_i$.

$(ii)\Rightarrow(iii)$.  By $(ii)$ we know that there is an orthogonal basis $\{x_i\}_{i\in K}$ for $W$ with the desired properties, so we just need to show that there is a projection $\mathcal{Q}_W$ such that $\mathcal{Q}_We_i=x_i$ for $i\in K$.  Define $U:\mathcal{H}\rightarrow\mathcal{H}$ by
$$
Ue_j = \begin{cases} 0, & \mbox{if } j\not\in K \\ x_j, & \mbox{if } j\in K. \end{cases}
$$
We now claim that $\mathcal{Q}_W=U\pi_V$ satisfies $(iii)$.  Clearly, $\mathcal{Q}_We_i=x_i$, so we just need to check that $\mathcal{Q}_W$ is in fact a projection.  If $j\not\in K$ then clearly $\mathcal{Q}_W^2e_j=0$.  If $j\in K$, then $\mathcal{Q}_{W}^2e_j=\mathcal{Q}_{W}x_j=U\pi_Kx_j=Ue_j=x_j$ so $\mathcal{Q}_W$ is a projection.

$(ii) \Rightarrow (v)$ is obvious.

$(v) \Rightarrow (i)$:  Define,
\[ \PP_We_j = 0 \ \ \mbox{if $j\notin K$},\ \ \PP_W\pi_K(x_j) = x_j.\]
It is immediate that $\PP_W$ is a projection.  Also, $\PP_W$ is an orthogonal operator
when restricted to span $\{e_i\}_{i\in K}$.  Hence, if $i,j\in K,\ i\not= j$ we have
\[ \langle \PP_W^*\PP_We_i,e_j\rangle = \langle \PP_We_i,\PP_We_j\rangle =0.\]
On the other hand, if $j\notin K$ then $\PP_We_j = 0$ and so
\[ \langle \PP_W^*\PP_We_i,e_j\rangle = \langle \PP_We_i,\PP_We_j\rangle
= \langle \PP_We_i,0\rangle =0.\]
So we have $(i)$.

The ``moreover" part of the theorem is obvious from the proofs.
\end{proof}

We now check that for large dimensional subspaces $W$ of $\mathcal{H}$, there is a fundamental restriction for finding a projection $\mathcal{P}_W$ onto $W$ so that $\mathcal{P}_W^*\mathcal{P}_W$ is diagonal with respect to $\{e_i\}$.

\begin{proposition}
Let $\mathcal{H}$ be an $N$ dimensional Hilbert space and $W$ a $k$-dimensional subspace.  If $k>\frac{N}{2}$ and there is a projection $\mathcal{P}_W$ such that the matrix of $\mathcal{P}_W^*\mathcal{P}_W$ is diagonal with respect to $\{e_i\}_{i=1}^N$ then there are at least $2k-n$ $e_i$'s in $W$.
\end{proposition}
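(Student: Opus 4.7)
The plan is to reduce everything to the equivalent characterization in Proposition \ref{2}(ii), which already gives us a very rigid coordinate description of $W$. Namely, from Proposition \ref{2}(ii) we obtain a subset $K\subseteq\{1,\ldots,N\}$ with $|K|=k$ and an orthogonal basis $\{x_i\}_{i\in K}$ of $W$ satisfying $x_i(j)=\delta_{ij}$ for $i,j\in K$. The key observation is that this normalization lets us split each $x_i$ canonically along $\cH_N = V_K \oplus V_{K^c}$, where $V_K = \mathrm{span}\{e_j\}_{j\in K}$ and $V_{K^c} = \mathrm{span}\{e_j\}_{j\in K^c}$.

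Concretely, I would write $x_i = e_i + y_i$ where $y_i \in V_{K^c}$ (this is exactly what the condition $x_i(j)=\delta_{ij}$ for $j\in K$ says). The next step is to transfer the orthogonality of $\{x_i\}_{i\in K}$ to the $y_i$'s: for $i\ne j$ in $K$,
\[
0 = \langle x_i,x_j\rangle = \langle e_i,e_j\rangle + \langle e_i,y_j\rangle + \langle y_i,e_j\rangle + \langle y_i,y_j\rangle = \langle y_i,y_j\rangle,
\]
because $e_i\perp e_j$ and each $y_\ell$ is supported on $K^c$, hence orthogonal to every $e_m$ with $m\in K$. So $\{y_i\}_{i\in K}$ is an orthogonal family sitting inside the $(N-k)$-dimensional subspace $V_{K^c}$.

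A pigeonhole count now finishes the argument. An orthogonal family in an $(N-k)$-dimensional space can contain at most $N-k$ \emph{nonzero} vectors (nonzero orthogonal vectors are linearly independent). Since the family has $|K|=k$ members and $k > N/2$ forces $k > N-k$, at least $k-(N-k)=2k-N$ of the $y_i$'s must vanish. For each such index $i$, we have $x_i = e_i$, i.e.\ $e_i\in W$. This produces the claimed $2k-N$ standard basis vectors inside $W$.

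I do not anticipate a real obstacle here: once Proposition \ref{2}(ii) is invoked, the split $x_i=e_i+y_i$ and the orthogonality transfer are forced by the support conditions, and the final count is a one-line dimension argument. The only thing to be careful about is making explicit why $\langle e_i,y_j\rangle = 0$ for $i\in K$, which is immediate from $y_j\in V_{K^c}$.
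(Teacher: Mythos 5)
Your proposal is correct and is essentially the paper's own argument: the paper also invokes Proposition \ref{2}(ii), notes that $\langle(I-\pi_V)x_i,(I-\pi_V)x_j\rangle=0$ (your $y_i$ is exactly $(I-\pi_V)x_i$), and concludes by the same dimension count that at least $2k-N$ of these components vanish, forcing $x_i=e_i$. The only difference is cosmetic: you write the decomposition as $x_i=e_i+y_i$ rather than via the projection $\pi_V$.
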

\begin{proof}
By proposition \ref{2} part (ii) we can find a $K\subseteq\{1,...,N\}$ such that $|K|=k$ and an orthogonal basis $\{x_i\}_{i\in K}$ for $W$ which satisfies $x_i(j)=\delta_{ij}$ for $i,j\in K$.  Therefore, we know that $\langle\pi_Vx_i,\pi_Vx_j\rangle=0$ for $i,j\in K$ which means that $\langle(I-\pi_V)x_i,(I-\pi_V)x_j\rangle=0$ for $i,j\in K$ (since we know $\langle x_i,x_j\rangle=0$).  Therefore $\{(I-\pi_V)x_i\}_{i\in K}$ is an orthogonal set inside an $N-k$ dimensional space, which means there is a $J\subseteq K$ such that $|J|\geq 2k-N$ and $(I-\pi_V)x_j=0$ for every $j\in J$.  Then for each $j\in J$ we actually have $x_j=e_j$.
\end{proof}


%

\section{The implementation of $\cPw$ via pseudoframes for subspaces}\label{secPFFS}

Pseudoframes for subspaces (PFFS) \cite{LiOgPFFS98} are an extension of frames for subspaces $\cW$ where both frame-like sequences $\{x_n\}$ and $\{\tilde x_n\}$ are not necessarily in $\cW$, yet for every $f\in W$
\[
f=\sum_n\< f, x_n\> \tilde x_n.
\]
Furthermore, the frame-like condition holds for all vectors in $\cW$.  Namely, there are constants $0<A\leq B<\infty$ such that for all $f\in W$
\[
A\|f\|^2 \leq\sum_n |\< f, x_n\> |^2 \leq B\|f\|^2.
\]
Bringing in a projection operator $\cP$ onto $\cW$, and a PFFS gives rise to
\begin{equation}\label{pffs}
\cP g=\sum_n\<\cP g, x_n\> \tilde x_n.
\end{equation}
for every $g\in\mathcal{H}$.

\subsection{$\cP$-consistent PFFS and non-orthogonal projections}

We recall the property of $\cP$-consistent PFFS with an assumption that the sequence $\{x_n\}$ is Bessel in $\cH$.  Using the same terminology of Aldroubi and Unser \cite{UnserAldroubi94}, we say a PFFS is $\cP$-consistent \cite{LiOg_noiseSuppression} if
$U\cP=U$, where $U:\cH\rightarrow l^2$ is the analysis operator functioning as the measuring device in the form $U f =\{\< f, x_n\>\}$ for all $f\in\cH$.  The $\cP$-consistent principle is to say that the direct measurement of a function $f$ equals the measurement of a projection (approximation) $\cP f$ of $f$ onto (in) $W$.   This clearly depends on the direction of the projection.  We also recall that a $\cP$-consistent PFFS expansion is precisely a non-orthogonal projection operator, and the direction of the projection can be arbitrarily adjusted by steering the $\sp\{x_n\}$ \cite{LiOgPFFS98}, \cite{LiOg_noiseSuppression}.

It is known that a PFFS is a $\cP$-consistent PFFS if and only if the direction of the projection (or the null space of $\cP$) $\ds\cN(\cP)=\sp\{x_n\}^{\perp}$ \cite{LiOg_noiseSuppression}, and it is always achievable.
Consequently, one can always have $\cN(\cP)=\sp\{x_n\}^{\perp}$, and the range of $\cP^*$, $\cR(\cP^*)=\cN(\cP)^{\perp}=\sp\{x_n\}$.  The resulting non-orthogonal projection is given by
\[
\cP g  = \sum_n\< g,\cP^* x_n\> \tilde x_n = \sum_n\< g, x_n\> \tilde x_n,
\]
for every $g\in\mathcal{H}$.

If we denote by $\cP_{W,\, \cN(\cP)}$ the projection operator with the first index $W$ as the range, and the second index $\cN(\cP)$  as the ``direction'' of the projection, then PFFS always produces a projection onto $W$ along the direction $\sp\{x_n\}^{\perp}$, namely $\cP_{W,\, \sp\{x_n\}^{\perp}}$.

From another point of view, if $\cN(\cP)$ is given, one can always select $\{x_n\}$ so that $\sp\{x_n\}=\cN(\cP)^{\perp}$, and thereby construct a non-orthogonal projection via PFFS.  More importantly, the selection of $\{x_n\}$ for a given $W$ is made easy by the following proposition.

\begin{theorem} \label{thm_PFFS_vs_frame_for_X} {\rm\cite{LiOgPFFS98}}
Let $\{x_n\}$ be a Bessel sequence with respect to $W$, and let $\{\tilde x_n\}$ be a
Bessel sequence in $\cH$. The following are equivalent:
\begin{enumerate}
\itemsep -2pt
\item $\{x_n, \tilde x_n\}$ is a PFFS for $W$.
\item If $\pi_W$ is the orthogonal projection of $\cH$ onto $W$, both of
the following hold:
\begin{enumerate}
    \itemsep -2pt
    \item $\{\pi_W x_n\}$ is a frame for $W$ with a dual frame $\{\pi_W \tilde x_n\}$.
    \item For all $f\in W$ we have $\ds\sum_n\< f, \pi_W x_n\> (I-\pi_W)\tilde x_n = 0$.
\end{enumerate}
\item There is a frame $\{w_n\}$ of $W$ with a dual frame $\{\tilde w_n\}\subseteq W$, a
sequence $\{z_n\}$ in $(I-\pi_W)\cH$ and a sequence $\{y_n\}\in {\mathcal
U}(\{w_n\})$ and a unitary operator $T: \sp\{y_n\}\rightarrow (I-\pi_W)\cH$ so
that
\[
   x_n=w_n+z_n
\]
and
\[
   \tilde x_n =\tilde w_n +T(y_n).
\]
\end{enumerate}
\end{theorem}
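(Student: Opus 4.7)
\medskip

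\noindent\textbf{Proof proposal.} The plan is to establish the cycle (1)$\Leftrightarrow$(2)$\Leftrightarrow$(3), starting with the cleanest equivalence (1)$\Leftrightarrow$(2) and then treating (2)$\Leftrightarrow$(3), which amounts to organizing the decomposition of the Bessel sequences relative to $\pi_W$ and $(I-\pi_W)$.

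For (1)$\Rightarrow$(2), take any $f\in W$ so that $\pi_W f=f$ and $\langle f,x_n\rangle=\langle f,\pi_W x_n\rangle$. Substituting into the PFFS reconstruction gives
\[
 f=\sum_n\langle f,\pi_W x_n\rangle\pi_W\tilde x_n +\sum_n\langle f,\pi_W x_n\rangle(I-\pi_W)\tilde x_n.
\]
Applying $\pi_W$ to both sides forces (2a) (the two projected sequences are dual frames of $W$), and applying $I-\pi_W$ forces (2b). The converse (2)$\Rightarrow$(1) is the same calculation run backwards: for $f\in W$, split $\tilde x_n=\pi_W\tilde x_n+(I-\pi_W)\tilde x_n$, use (2a) on the first piece to recover $f$, and kill the second piece by (2b). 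The frame bounds in (2a) and the Bessel bound on $\{x_n\}$ over $W$ give the PFFS bounds automatically.

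The direction (3)$\Rightarrow$(2) is a matter of substitution. Setting $w_n=\pi_W x_n$ and $\tilde w_n=\pi_W\tilde x_n$ is forced by $z_n\in(I-\pi_W)\cH$ and $T(y_n)\in(I-\pi_W)\cH$; then (2a) is precisely the assumption that $\{w_n\}$ and $\{\tilde w_n\}$ are dual frames of $W$, while (2b) reduces, via the unitarity of $T$, to the defining orthogonality property that membership of $\{y_n\}$ in $\mathcal U(\{w_n\})$ is meant to encode, namely $\sum_n\langle f,w_n\rangle y_n=0$ for all $f\in W$.

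The main work, and the main obstacle, is (2)$\Rightarrow$(3). Given (2), the natural choices are $w_n=\pi_W x_n$, $z_n=(I-\pi_W)x_n$, $\tilde w_n=\pi_W\tilde x_n$, and $u_n:=(I-\pi_W)\tilde x_n\in(I-\pi_W)\cH$, so that $x_n=w_n+z_n$ and $\tilde x_n=\tilde w_n+u_n$ hold by construction, and (2a) gives the frame-dual pair in $W$. The delicate step is to represent $u_n=T(y_n)$ with $\{y_n\}\in\mathcal U(\{w_n\})$ and $T$ a unitary from $\overline{sp}\{y_n\}$ onto $(I-\pi_W)\cH$. The correct interpretation of $\mathcal U(\{w_n\})$ is the family of sequences $\{y_n\}$ in some auxiliary Hilbert space satisfying $\sum_n\langle f,w_n\rangle y_n=0$ for every $f\in W$; condition (2b) says exactly that $\{u_n\}$ sits in this class with values in $(I-\pi_W)\cH$. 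To produce $T$, one picks an auxiliary index Hilbert space $K$ with an isometric copy $\{y_n\}\subset K$ of $\{u_n\}$ modulo the kernel relations forced by (2b), and extends the tautological map $y_n\mapsto u_n$ to a unitary on the closed span, after enlarging $K$ if needed so that $\overline{sp}\{y_n\}$ has the same dimension as $(I-\pi_W)\cH$. The well-definedness of this extension is exactly the point where (2b) is used; once $T$ is unitary on $\overline{sp}\{y_n\}$, the three required identities follow directly. This construction of $T$ and the correct choice of the space carrying $\{y_n\}$ is the step I expect to require the most care.
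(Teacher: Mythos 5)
The paper does not prove Theorem \ref{thm_PFFS_vs_frame_for_X}: it is imported verbatim from \cite{LiOgPFFS98} with no argument supplied, so there is no in-paper proof to compare against and your proposal has to be judged on its own. Your treatment of $(1)\Leftrightarrow(2)$ and $(3)\Rightarrow(2)$ is correct and is the standard route: for $f\in W$ one has $\langle f,x_n\rangle=\langle f,\pi_W x_n\rangle$, and splitting $\tilde x_n=\pi_W\tilde x_n+(I-\pi_W)\tilde x_n$ separates the reconstruction identity into its $W$ and $W^{\perp}$ components; in $(3)\Rightarrow(2)$ the boundedness of $T$ lets you write $\sum_n\langle f,w_n\rangle T(y_n)=T\bigl(\sum_n\langle f,w_n\rangle y_n\bigr)=0$. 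One gloss to repair: ``applying $\pi_W$ forces (2a)'' only yields the reconstruction formula; to conclude that $\{\pi_W x_n\}$ is a \emph{frame} for $W$ you need the one-line Cauchy--Schwarz step $\|f\|^2=\sum_n\langle f,\pi_W x_n\rangle\langle \pi_W\tilde x_n,f\rangle\le\bigl(\sum_n|\langle f,\pi_W x_n\rangle|^2\bigr)^{1/2}B^{1/2}\|f\|$ for the lower bound, the upper bound being the Bessel hypothesis on $\{x_n\}$ over $W$.

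The genuine problem is in $(2)\Rightarrow(3)$, and specifically in the device of ``enlarging $K$ so that $\overline{sp}\{y_n\}$ has the same dimension as $(I-\pi_W)\cH$.'' Enlarging the ambient space does not enlarge $\overline{sp}\{y_n\}$, and if $T$ is to be a unitary of $\overline{sp}\{y_n\}$ \emph{onto} $(I-\pi_W)\cH$ with $T(y_n)=u_n:=(I-\pi_W)\tilde x_n$, then necessarily $(I-\pi_W)\cH=T(\overline{sp}\{y_n\})=\overline{sp}\{u_n\}$. That surjectivity is simply not implied by (1) or (2): take all $\tilde x_n\in W$ with $W\neq\cH$, so $u_n=0$. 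So no choice of auxiliary space rescues a surjective $T$; what your argument actually produces (and all that can be produced) is an isometry of $\overline{sp}\{y_n\}$ onto $\overline{sp}\{u_n\}$. The clean resolution is to take $y_n=u_n$ and $T$ the inclusion of $\overline{sp}\{u_n\}$ into $(I-\pi_W)\cH$, reading ``unitary'' in (3) as ``isometry into'' (equivalently, replacing the codomain by $\overline{sp}\{(I-\pi_W)\tilde x_n\}$); Besselness of $\{y_n\}$ then follows from Besselness of $\{\tilde x_n\}$, and membership in $\mathcal{U}(\{w_n\})$ is literally condition (2b). This is consistent with the only part of the characterization the present paper actually uses, namely $x_n=w_n+z_n$ with $z_n\in W^{\perp}$. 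As written, though, your construction of $T$ does not close, and the claim that enlarging $K$ fixes the dimension count is false.
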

Here ${\mathcal U}$ is defined as follows.  If $\{w_{n}\}$ is a frame for $W$, then
\[
{\mathcal U} (\{x_n \}) \equiv\left\{\mbox{Bessel} \{y_n \}:
\sum_{n} \langle f, x_n \rangle y_n = 0,\
\mbox{for all $f\in {W}$}\right\}.
\]

Therefore, if we construct $\{x_n\}$ so that $\sp\{x_n\}=\cN(\cP)^{\perp}$, it turns out the choice of $\{x_n\}$ is fairly easy - adding to a frame $\{w_n\}$ of $W$ orthogonal components $\{z_n\}\subseteq W^{\perp}$ so that $\sp\{x_n\} = \cN(\cP)^{\perp}$.

\subsection{Sensor measurements were not altered while implementing a $\cP_{\cW}$ via PFFS}

Suppose fusion frames are applied in sensor network data collection applications.   Each sensor is then spanned by a sensory frame $\{w_n\}$ given by the elementary transformation (often simple shifts) of the spatial reversal of the sensor's impulse response function \cite{LiYan08}.

The measurement of each sensor is thus given by $\{\< f, w_n\>\}$ a-priori by the physics of the sensor.   Any post processing/fusion operation would have to make use of such a-priori measurements.  Implementation through PFFS can achieve that.  Recall, for all $f\in\mathcal{H}$
\[
\cP_{\cW} f=\sum_n\< f, x_n\> \tilde x_n = \sum_n\< f, w_n + z_n\> \tilde x_n =\sum_n \left(\< f, w_n\> +\< f, z_n\>\right)\tilde x_n.
\]
Consequently, we will just need to add a controlled measurement to the sensor's
complement subspace via $\{\< f,z_n\>\}$.  The implementation of any non-orthogonal projection $\cP_{\cW}$ will be achieved together with the a-priori sensor measurement $\{\< f, w_n\>\}$.   In particular, if a signal $f$ is within the sensory subspace spanned by $\sp\{w_n\}$, then $\< f, x_n\> =\< f, w_n\>$.

\subsubsection*{Subspace transformation will not do} \ \
It is worth mentioning that techniques of subspace rotation or transformation with the purpose of diagonalizing orthogonal projection operators would not be able to make use of the sensor measurements $\{\< f, w_n\>\}$.  This is because diagonalization of $\pi_{\cW}$ involves a unitary operator $T$ such that
\[
D = T^H\left({\tilde X}^H X\right) T,
\]
where $X$ is the matrix with frame elements $\{w_n\}$ as its columns.

On the one hand, it seems that a transformation in the form of $F =XT$, which is equivalent to the rotation of the subspace, would have had the orthogonal projection $\pi_{\cW}$ diagonalized.   On the other hand, the new frame system $F=XT$ would have to ``measure'' functions $f$ by $XT(f)$.  But the $T(f)$ part simply does not exist  in (at least) sensor network and distributed processing applications.

This is why non-orthogonal fusion frames is a much more natural tool to achieve the sparsity of the fusion frame operator.

\subsection{The matrix representation of the (new) fusion frame operator $\cSw$}

For computational needs, we show that the new fusion frame operator has a natural matrix representation via PFFS.

Let $\{w_n\}$ and $\{\tilde w_n\}$ be a frame and a dual frame, respectively, of the subspace $W$.  Form $x_n = w_n + z_n$ with $z_n\in W^{\perp}$ for all $n$ such that $\sp\{x_n\}=\cN(\cP)^{\perp}$, and
note that $\{x_n\}$ is a Bessel sequence of $\cH$.
Then for every $f\in\mathcal{H}$
\[
\cP_{W,\sp\{x_n\}^{\perp}} f = \sum_n\< f, x_n\> \tilde w_n.
\]
Consequently, if $X$ is the matrix with $\{x_n\}$ as columns, and $Y$ is the matrix with $\{\tilde w_n\}$ as columns, then a natural matrix representation of $\cP_{W,\sp\{x_n\}^{\perp}}$ is
\[
    \cP_{W,\sp\{x_n\}^{\perp}} = YX^{H}.
\]
As a result, the fusion frame operator $\cSw$ is represented by
\[
\cSw= \cP^* \cP = XY^{H}YX^{H}.
\]

\section{Eigen-properties of $\cP_W$}

We will compare non-orthogonal projections to orthogonal projections.
The first result is an alternative construction tool.

\begin{proposition}\label{prop1}
Let $W$ be a $k$-dimensional subspace of $\cH_N$ and $\PP_W$ be
a (non-orthogonal) projection onto $W$.  Let:

(1)  $\{x_i\}_{i=1}^k$ be an orthonormal basis for $W$.

((2)  $\{y_i\}_{i=k+1}^N$ be an orthonormal basis for the $N-k$-dimensional
space $V=(I-\PP_W)\cH_N$.

Then:
\[ \{x_i,y_j\}_{i=1,\ j=k+1}^{\  k\ \  \ \ N}\]
are the eigenvectors of $\PP_W$ with eigenvalues ``1" for $x_i$ and
eigenvalues "0" for $y_j$.

In particular, $\PP_W$ is an orthogonal projection if and only if $V=W^{\perp}$.

\end{proposition}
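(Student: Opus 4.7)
The plan is to exploit the fundamental direct sum decomposition $\cH_N = W \oplus \cN(\PP_W)$ attached to any projection with range $W$, and to identify the given subspace $V = (I-\PP_W)\cH_N$ with $\cN(\PP_W)$.

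First, I would handle the eigenvalue $1$ assertion: since $\PP_W$ is a projection with range $W$, every vector in $W$ is fixed by $\PP_W$; in particular $\PP_W x_i = x_i$ for $i=1,\dots,k$, so each $x_i$ is an eigenvector with eigenvalue $1$.

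Next, I would identify $V$ with $\cN(\PP_W)$. If $v \in V$ then $v = (I-\PP_W)u$ for some $u\in\cH_N$, and $\PP_W v = \PP_W u - \PP_W^2 u = 0$ using $\PP_W^2 = \PP_W$. Conversely, if $\PP_W v = 0$ then $v = (I-\PP_W)v \in V$. Hence $V = \cN(\PP_W)$, which gives $\PP_W y_j = 0$ for $j = k+1,\dots,N$, establishing eigenvalue $0$. To confirm that $\{x_i\}_{i=1}^k \cup \{y_j\}_{j=k+1}^N$ really constitutes a full eigenbasis I would note $\dim W + \dim V = k + (N-k) = N$, together with $W \cap V = \{0\}$ (since any $w\in W\cap V$ satisfies $w=\PP_W w = 0$); linear independence of the full family then follows from independence inside each summand.

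For the ``in particular'' clause, I would use the standard characterization that a projection is orthogonal iff $\PP_W^* = \PP_W$, or equivalently iff $\cN(\PP_W) \perp \cR(\PP_W) = W$. Since $V = \cN(\PP_W)$ and $\dim V = N - k = \dim W^\perp$, the perpendicularity condition is equivalent to the set equality $V = W^\perp$, which gives both implications.

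I do not anticipate any real obstacle: the whole statement is essentially a repackaging of the direct sum $\cH_N = W \oplus \cN(\PP_W)$ and the definition of an orthogonal projection. The only step requiring any care is the identification $V = \cN(\PP_W)$, which is a two-line computation from $\PP_W^2 = \PP_W$.
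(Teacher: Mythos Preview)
Your proposal is correct and follows essentially the same approach as the paper: verify $\PP_W x_i = x_i$ from the projection property and $\PP_W y_j = 0$ from $V = \cN(\PP_W)$. Your version is in fact more complete, since you also justify the identification $V = \cN(\PP_W)$, check that the combined family is a basis, and prove the ``in particular'' clause about orthogonality---all of which the paper's proof leaves implicit or omits entirely.
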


\begin{proof}
Since $\PP_W$ is a projection, we have
\[ \PP_Wx_i = x_i.\]
i.e.  $x_i$ is an eigenvector for $\PP_W$ with eigenvalue ``1".  Also,
\[ \PP_Wy_i = 0, \]
So $y_i$ is an eigenvector for $\PP_W$ with eigenvalue ``0".
\end{proof}

The converse of the above proposition is also true.

\begin{proposition}\label{prop2}
Given $W,V$ subspaces of $\cH_N$ with $W\cap V = \{0\}$, and
\[ dim \ W=k,\ \ \mbox{and dim}\ V = N-k.\]
Choose orthonormal bases $\{x_i\}_{i=1}^k$ and $\{y_i\}_{i=k+1}^N$
for $W$ and $V$ respectively.  Given $x\in \cH_N$, there are unique scalars
$\{a_i\}_{i=1}^N$ so that
\[ x = \sum_{i=1}^k a_ix_i + \sum_{i=k+1}^N a_iy_i.\]
Define
\[ \PP_W(x) = \sum_{i=1}^k a_ix_i.\]
Then $\PP_W$ is a projection on $\cH_N$ (and hence, $\PP_W$ has eigenvectors
$\{x_i,y_j\}_{i=1,j=k+1}^{ \ k\ \ ,\ \ N}$ and eigenvalues ``1" for $x_i$ and
``0" for $y_j$).
\end{proposition}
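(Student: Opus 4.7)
The plan is to verify well-definedness first, then linearity and idempotence, and finally read off the eigen-decomposition. The only nontrivial ingredient is showing that every $x \in \cH_N$ has a \emph{unique} representation of the form $\sum_{i=1}^k a_i x_i + \sum_{i=k+1}^N a_i y_i$, so that the scalars $a_i$ (and hence $\PP_W$) are well-defined.

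To establish the unique decomposition, I would use the hypothesis $W \cap V = \{0\}$ together with a dimension count:
\[
\dim(W+V) \;=\; \dim W + \dim V - \dim(W\cap V) \;=\; k + (N-k) - 0 \;=\; N,
\]
so $W + V = \cH_N$. Since $\{x_i\}_{i=1}^k$ spans $W$ and $\{y_j\}_{j=k+1}^N$ spans $V$, the concatenation $\{x_1,\dots,x_k,y_{k+1},\dots,y_N\}$ is a spanning set of $N$ vectors in an $N$-dimensional space, hence a basis (ordered, but not necessarily orthonormal as a full system). Uniqueness of the coordinate expansion follows.

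With the basis in hand, $\PP_W$ is defined by its action on basis vectors via $\PP_W x_i = x_i$ and $\PP_W y_j = 0$, and extended linearly; linearity of $\PP_W$ is immediate from the linearity of the coordinate maps $x \mapsto a_i$. For idempotence, any $\PP_W(x) = \sum_{i=1}^k a_i x_i$ already lies in $W$, and the expansion of an element of $W$ in the basis has all $y_j$-coefficients equal to $0$, so $\PP_W(\PP_W(x)) = \PP_W(\sum_{i=1}^k a_i x_i) = \sum_{i=1}^k a_i x_i = \PP_W(x)$. Therefore $\PP_W^2 = \PP_W$, confirming it is a (generally non-orthogonal) projection with range $W$.

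The eigen-claim is then a one-line observation: each $x_i$ has expansion with $a_i = 1$ and all other coefficients zero, so $\PP_W x_i = x_i$, giving eigenvalue $1$; similarly each $y_j$ has $a_j = 1$ and all other coefficients zero, so $\PP_W y_j = 0$, giving eigenvalue $0$. There is no real obstacle in this proof — the only subtlety is being explicit that $W \cap V = \{0\}$ plus the matching dimensions forces $W \oplus V = \cH_N$, which is what licenses the coordinate definition of $\PP_W$ in the first place.
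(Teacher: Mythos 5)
Your proof is correct and follows essentially the same route as the paper, whose entire proof is just the idempotence computation $\PP_W(\PP_W x) = \PP_W(\sum_{i=1}^k a_i x_i) = \sum_{i=1}^k a_i x_i = \PP_W(x)$. The extra material you supply (the dimension count showing $W \oplus V = \cH_N$, hence well-definedness of the coefficients) is a sound justification of the unique decomposition that the paper simply asserts in the statement and takes for granted.
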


\begin{proof}
We compute:
\[ \PP_W(\PP_Wx) = \PP_W(\sum_{i=1}^k a_i x_i) = \sum_{i=1}^k a_ix_i= \PP_W(x)\ \ \mbox{(by
definition)}.\]
\end{proof}

The above tells us what we can get out of non-orthogonal
projections if we are projecting along a subset of the basis.  To keep the notation
simple, we will project {\em along} span $\{e_i\}_{i=k+1}^N$.  But this clearly
works exactly the same for any $K\subset \{1,2,\ldots,N\}$ with $|K|=k$.

\begin{corollary}\label{cor1}
In $\cH_N$, let $K=\{1,2,\ldots, k\}$.  Choose an orthogonal set of vectors
$\{y_i\}_{i=1}^k$ in $(I-\PP_K)\cH_N = \PP_{K^c}\cH_N$ and for each
$i=1,2, \ldots ,k$ let
 \[ x_i =\frac{1}{\|e_i+y_i\|} e_i+\frac{1}{\|e_i+y_i\|}y_i.\]
   (Note that if $N-k < k$,
then $2k-N$ of the $y_i$ will be zero).  Let $W$ = span $\{x_i\}_{i=1}^k$.
Define $\PP_W:\cH_N \rightarrow W$ by
\[ \PP_We_i = \|e_i+y_i\|x_i, \ \ \mbox{if}\ \ i\in K,\]
and
\[ \PP_We_i =0,\ \ \mbox{if}\ \ i\in K^c.\]
Then $\PP_W$ is a (non-orthogonal) projection having eigenvectors $\{x_i\}_{i=1}^k$
with eigenvalues  ``1" and eigenvectors $\{e_n\}_{n=N-k}^N$  with eigenvalues ``0" for $n\in K^c$.

Moreover, $\PP_W^{*}\PP_W$ is a diagonal matrix with eigenvectors $\{e_n\}_{n=1}^N$
and eigenvalues ``0" for $i=k+1,k+2,\ldots, N$ and eigenvalues
$\|e_i+y_i\|^2=1+\|y_i\|^2$ for $i=1,2,\ldots, k$.

Finally, if $Q$ is the orthogonal projection of $\cH_N$ onto the same span $W$, then
$Q$ has eigenvectors $\{x_i\}_{i=1}^k$ with eigenvalues ``1" and eigenvectors
$\{z_i\}_{i=k+1}^N$ an orthonormal basis for $W^{\perp}$ with eigenvalues ``0".
\end{corollary}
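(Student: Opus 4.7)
The plan is to handle the three assertions in order, leveraging Propositions \ref{prop1} and \ref{prop2} once the key inner-product computation is done. First I would set $V = \mathrm{span}\{e_j : j\in K^c\}$ and verify $W\cap V=\{0\}$: any $v\in W\cap V$ admits $v = \sum_{i\in K}\frac{a_i}{\|e_i+y_i\|}(e_i+y_i)$, and since the $y_i$ all lie in $V$, reading off the $e_i$-coefficients for $i\in K$ forces every $a_i=0$. With $\dim W + \dim V = N$, Proposition \ref{prop2} applies, and a direct check $\PP_W x_i = \frac{1}{\|e_i+y_i\|}(\PP_W e_i + \PP_W y_i) = x_i$ (using $y_i \in V \subset \ker \PP_W$) confirms that $\PP_W$ is precisely the projection onto $W$ along $V$, with the asserted eigenvectors and eigenvalues.

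Second, to get the diagonality of $\PP_W^*\PP_W$ in the standard basis, the central step is proving that $\{x_i\}_{i=1}^k$ is itself orthonormal. Unit length comes for free from $\|e_i+y_i\|^2 = 1+\|y_i\|^2$, since $e_i\perp y_i$. For $i\neq j$ in $K$, $\langle x_i,x_j\rangle$ is proportional to $\langle e_i,e_j\rangle + \langle e_i,y_j\rangle + \langle y_i,e_j\rangle + \langle y_i,y_j\rangle$; the first term vanishes because $i\ne j$ in $K$, the middle two because every $y_\ell$ is orthogonal to $\{e_m : m\in K\}$, and the last by the assumed orthogonality of $\{y_i\}$. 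Off-diagonal entries $\langle \PP_W^*\PP_W e_i, e_j\rangle = \langle \PP_W e_i, \PP_W e_j\rangle$ then vanish (trivially when either index lies in $K^c$), and the diagonal entry at $i\in K$ is $\|\PP_W e_i\|^2 = \|e_i+y_i\|^2 = 1+\|y_i\|^2$, as claimed.

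Third, since $\{x_i\}_{i=1}^k$ is now known to be an orthonormal basis of $W$, extending by any orthonormal basis $\{z_i\}_{i=k+1}^N$ of $W^\perp$ yields the eigenbasis promised by Proposition \ref{prop1} applied to the orthogonal projection $Q$ onto $W$ (the case $V=W^\perp$ of that proposition): eigenvalue $1$ on the $x_i$ and $0$ on the $z_i$. The only non-routine step in the entire argument is the orthonormality of $\{x_i\}$; this is what separates the diagonal structure of $\PP_W^*\PP_W$ from the mere $K\times K$ block-diagonal structure guaranteed by Proposition \ref{prop_kxk}, and I expect it to be the main content of the proof — although in this carefully chosen parametrization it dissolves immediately because the $y_i$ were assumed orthogonal and confined to $\PP_{K^c}\cH_N$.
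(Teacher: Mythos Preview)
Your proposal is correct and follows essentially the same route as the paper: verify $\PP_W x_i = x_i$ to confirm the projection property, then compute $\langle \PP_W e_i, \PP_W e_j\rangle$ using the orthonormality of $\{x_i\}$ (which the paper invokes as $\langle x_i,x_j\rangle=0$ without spelling out the inner-product expansion you give). Your explicit appeal to Propositions~\ref{prop1} and~\ref{prop2} and the verification of $W\cap V=\{0\}$ are more careful than the paper's direct computation, but the substance is identical.
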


\begin{proof}
For $x_i \in W$,
\begin{eqnarray*}
\PP_W(x_i)&=& \PP_W( \frac{1}{\|e_i+y_i\|}e_i )+ \PP_W(\frac{1}{\|e_i+y_i\|}y_i)\\
&=& \frac{1}{\|e_i+y_i\|}\PP_We_i + 0\\
&=& \|e_i+y_i\|(\frac{1}{\|e_i+y_i\|} )x_i\\
&=& x_i.
\end{eqnarray*}
So $\PP_W$ is a projection.

For the {\em moreover part}, if  $j$ is not in $K$ then
\[ \langle \PP_W^{*}\PP_We_i,e_j \rangle = \langle \PP_We_i,\PP_We_j\rangle
= \langle \PP_We_i,0\rangle =0.\]
If $i\not= j\in K$ then
\begin{eqnarray*}
\langle \PP_W^{*}\PP_We_i,e_j \rangle &=& \langle \PP_We_i,\PP_We_j\rangle\\
&=&\langle \|e_i+y_i\| x_i,\|e_j+y_j\|x_j\rangle\\
&=& \|e_i+y_i||\|e_j+y_j\|\langle x_i,x_j\rangle = 0.
\end{eqnarray*}
And if $i=j\in K$ then
\[ \langle \PP_W^{*}\PP_We_i,e_i \rangle = \|\PP_We_i\|^2 = \|e_i+y_i\|^2.\]

The {\em finally part} is clear.
\end{proof}

\noindent {\bf Remark}:  It is worth understanding  intuitively why $\PP_W^{*}\PP_W$
has all of its non-zero eigenvalues $\ge 1$.  This is happening because by forcing
ourselves to project along a set of $e_j's$, we see that $\PP_W$ must project a
set of vectors of the form $e_i$ to vectors of the form $e_i+y_i$ where
$y_i \perp e_i$, and hence
\[ \|\PP_We_i \|^2= \|e_i\|^2+\|y_i\|^2 \ge \|e_i\|^2=1.\]

Now we can see what diagonal entries we can get when $\PP_W^{*}\PP_W$ is
a diagonal matrix.

\begin{corollary}\label{cor2}
Fix $1\le k\le N$ and choose $K\subset \{1,2,\ldots,N\}$ with $|K|=k$.

(1)  If dim $k \le \frac{N}{2}$ and any numbers $\{a_n\}_{n\in K}$ are given
with $a_n \ge 1$, there is a subspace $W$ of $\cH_N$ and a (non-orthogonal) projection
$\PP_W$ onto $W$ so that the eigenvectors of $\PP_W^{*}\PP_W$ are
$\{e_n\}_{n=1}^N$  with respective eigenvalues $\{a_n\}_{n\in K}$ and ``0"
if $n\notin K$.  That is, $\PP_W^{*}\PP_W$ is a diagonal matrix with non-zero
diagonal entries $\{a_n\}_{n\in K}$.

(2)  If $k> \frac{N}{2}$, there is a $K_1 \subset K$ with $|K_1| = N-k$
and if $\{a_n\}_{n\in K_1}$ are given with $a_n\ge 1$, then there is a
subspace $W$ of $\cH_N$ and a  (non-orthogonal)
projection $\PP_W$ onto $W$ so that $\PP_W^{*}\PP_W$ is a diagonal matrix with diagonal
entries ``0" if $n\notin K$, diagonal entries ``1" if $n\in K\setminus K_1$,
and diagonal entries $a_n$ if $n\in K_1$.
\end{corollary}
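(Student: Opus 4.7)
The plan is to invoke Corollary \ref{cor1} directly, choosing the auxiliary orthogonal family $\{y_i\}_{i=1}^{k}\subset\PP_{K^c}\cH_N$ in that corollary to realize the prescribed diagonal entries through the identity $1+\|y_i\|^{2}=a_n$. Although Corollary \ref{cor1} is stated for $K=\{1,\ldots,k\}$, the construction is equivariant under permutations of the standard basis, so it applies verbatim to any $k$-subset $K\subset\{1,\ldots,N\}$ (this is precisely the parenthetical remark preceding Corollary \ref{cor1}).

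For part (1), note that $|K^c|=N-k\ge k$, so one can pick an injection $\sigma\colon K\to K^c$. For each $n\in K$ set $y_n=\sqrt{a_n-1}\,e_{\sigma(n)}$; this is well defined because $a_n\ge 1$, and the family is mutually orthogonal since its nonzero members are supported on distinct standard basis vectors of $K^c$. Feeding $\{y_n\}_{n\in K}$ into Corollary \ref{cor1} produces a projection $\PP_W$ for which $\PP_W^{*}\PP_W$ is diagonal with entries $1+\|y_n\|^{2}=a_n$ on $K$ and $0$ on $K^c$.

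For part (2), $|K^c|=N-k<k$, so $\PP_{K^c}\cH_N$ can host at most $N-k$ mutually orthogonal \emph{nonzero} vectors; the remaining $2k-N$ of the $y_i$'s are forced to be zero, as already noted in Corollary \ref{cor1}. Choose any $K_1\subset K$ with $|K_1|=N-k$ and any bijection $\sigma\colon K_1\to K^c$. Set $y_n=\sqrt{a_n-1}\,e_{\sigma(n)}$ for $n\in K_1$ and $y_n=0$ for $n\in K\setminus K_1$. This family is orthogonal in $\PP_{K^c}\cH_N$, and Corollary \ref{cor1} yields $\PP_W$ whose $\PP_W^{*}\PP_W$ has diagonal entries $a_n=1+\|y_n\|^{2}$ for $n\in K_1$, $1=1+0$ for $n\in K\setminus K_1$, and $0$ for $n\notin K$.

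There is no real obstacle here beyond bookkeeping on top of Corollary \ref{cor1}. The only conceptual point, already made in the Remark after Corollary \ref{cor1}, is that projecting \emph{along} coordinate directions forces $\|\PP_W e_i\|^{2}\ge 1$ whenever the coordinate is not killed; this is the structural reason behind the hypothesis $a_n\ge 1$ in both parts and the unavoidable $1$'s appearing on $K\setminus K_1$ in part (2).
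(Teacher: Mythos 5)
Your proof is correct and follows essentially the same route as the paper's: both parts reduce to Corollary \ref{cor1} by producing an orthogonal family $\{y_n\}$ in $\PP_{K^c}\cH_N$ with $\|y_n\|^2=a_n-1$ (setting the leftover $y_n$'s to zero in part (2)), the only difference being that you make the paper's dimension-count existence claim explicit by taking $y_n$ to be scalar multiples of distinct basis vectors indexed by an injection into $K^c$.
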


\begin{proof}(1)
Since $dim\ W \le \frac{N}{2}$, we have that
\[ N - \frac{N}{2}= \frac{N}{2} \ge dim\  W.\]
Hence, there is an orthogonal set of vectors $\{y_n\}_{n\in K}$ satisfying:
\vskip12pt
(a)    $y_n \in \PP_{K^c}\cH_N$.
\vskip12pt
(b)  $\|y_n\|^2 = a_n-1$.
\vskip12pt
By Corollary \ref{cor1}, there exists a subspace $W$ with
\[ W = span \{\frac{1}{\|e_n+y_n\|}\left ( e_n+y_n \right):n\in K\},\]
and a projection $\PP_W$
so that $\PP_W^{*}\PP_W$ has eigenvectors $\{e_n\}_{n=1}^N$ and non-zero eigenvalues
only for $n\in K$ which are of the form:
\[ \|e_n+y_n\|^2 = \|e_n\|^2 + \|y_n\|^2 = 1+ (a_n-1) = a_n.\]
\vskip12pt
(2) We just do as in (1) except now, we can only find $N-k$ orthogonal vectors
$y_n$ in $\PP_{K^c}\cH_N$.  So we pair these $y_n's$ with $N-k$ of the $e_n's$
in $K$ and put $e_n\in W$ for the rest of the $n\in K$.
\end{proof}

\section{Tight and multiple fusion frames}

Nonorthogonal fusion frames bring in some
quite unique
 properties that the orthogonal fusion frames do not have.   For instance, we can now easily construct examples of tight fusion frames for non-orthogonal projections which do not exist in orthogonal fusion frames.  In fact, we may have
quite spectacular examples where tight fusion frames can be constructed via one (proper) subspace.

One immediate observation is that we may have multiple non-orthogonal projections onto one given subspace, now that (non-orthogonal) projections are no longer unique.  We show that by applying multiple projections onto one and each subspace, tight nonorthogonal fusion frames exist.

\begin{remark}  First, let us observe that there is an obvious restriction on the number of projections we need.  That is, if $W$ has dimension $k$ in $\cH_N$, then each projection onto $W$ has at most $k$ non-zero eigenvalues (and $P_W^*P_W$ also
has the same).  So if we want $\sum_{i=1}^{L} \PP_i^{*}\PP_i = \lambda I$, then
\[ L \ge \lfloor \frac{N}{k}\rfloor +1.\]

In the next proposition we will see that this works if
$k$ divides $N$.  However, it can be shown that if $k$ does not divide $N$
then this result fails in general.
\end{remark}

\begin{proposition}\label{prop3}
Let $W\subseteq\mathcal{H}$ be a subspace of dimension
$k\ge 1$.

(1)  If $k\geq \frac{N}{2}$, there are
non-orthogonal projections $\{\PP_i\}_{i=1}^{2}$ onto $W$ so that
\[ \PP_1^{*}\PP_1 + \PP_{2}^*\PP_2 = 2I.\]

(2)  If $N=kL$, there are
non-orthogonal projections $\{\PP_i\}_{i=1}^{L}$ onto $W$ so that
\[ \sum_{i=1}^{L}\PP_i^{*}\PP_i = LI.\]

%
(3)  If $N=kL+M$ and $1\le M<k$, there exists a subspace $W$
of dimension $k$ and
non-orthogonal projections $\{\PP_i\}_{i=1}^{L+1}$ onto $W$ so that
$\sum_{i=1}^{L}\PP_i^{*}\PP_i$ has $\{e_i\}_{i=1}^N$ as eigenvectors
with eigenvalues ``L+1" for $\{e_i\}_{i=1}^{N-M}$ and eigenvalues ``L"
for $\{e_i\}_{i=N-M+1}^N$.
\end{proposition}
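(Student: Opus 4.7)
The plan is to parametrize projections onto $W$ by a block-matrix normal form and then reduce each part to a dimension-count for orthogonal systems inside a certain kernel. Fix an orthonormal basis of $\cH_N$ adapted to $\cH_N = W \oplus W^{\perp}$; in this basis every non-orthogonal projection onto $W$ has the block form
\[
\PP = \begin{pmatrix} I_k & C \\ 0 & 0 \end{pmatrix}, \qquad C \in \CC^{k\times(N-k)}
\]
(the choice of $C$ is precisely the choice of $\cN(\PP)$), and a one-line computation gives $\PP^{*}\PP = \left(\begin{smallmatrix} I_k & C \\ C^{*} & C^{*}C \end{smallmatrix}\right)$. Consequently any target of the form ``constant $\lambda$ on the $W$-block, prescribed positive diagonal on the $W^{\perp}$-block'' for $\sum_i\PP_i^{*}\PP_i$ is equivalent to the two conditions $\sum_i C_i = 0$ and $\sum_i C_i^{*}C_i$ equal to the prescribed diagonal (with $\lambda$ the number of summands). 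Stacking the $C_i$ vertically into a single matrix $C$, these become: the columns of $C$ lie in the kernel $V$ of the block-sum map $e^{T}\otimes I_k$ (where $e=(1,\ldots,1)^{T}$ has the appropriate length), and $C^{*}C$ equals the prescribed diagonal. Feasibility thereby reduces to whether $N-k$ mutually orthogonal vectors of the prescribed norms fit inside $V$.

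For (1), two projections give $\dim V = k$, and the system forces $C_2=-C_1$ with $C_1^{*}C_1 = I_{N-k}$, which demands a $k\times(N-k)$ matrix with orthonormal columns; this exists iff $N-k\le k$, exactly the hypothesis $k\ge N/2$. For (2), $\dim V = Lk-k = N-k$ matches the number of columns, so scaling any orthonormal basis of $V$ by $\sqrt{L}$ and splitting it into $L$ row-blocks yields the required $C_i$; this works for every $W$ because the whole argument lives in the adapted basis.

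For (3) I would exploit the freedom to choose $W$ by taking $W=\mathrm{span}\{e_1,\ldots,e_k\}$, so that the adapted basis coincides with the standard basis $\{e_i\}$. The top-left block of $\sum_{i=1}^{L+1}\PP_i^{*}\PP_i$ is then automatically $(L+1)I_k$, producing the eigenvalue $L+1$ on $e_1,\ldots,e_k$; the remaining requirements $\sum c_i=0$ and
\[
\sum_{i=1}^{L+1} c_i^{*}c_i = \mathrm{diag}\bigl(\underbrace{L+1,\ldots,L+1}_{k(L-1)},\underbrace{L,\ldots,L}_{M}\bigr)
\]
then yield the prescribed eigenvalues on $e_{k+1},\ldots,e_N$. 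The same stacking, now with $e\in\CC^{L+1}$, produces a kernel of dimension $Lk$, and the construction amounts to fitting $N-k=k(L-1)+M$ mutually orthogonal vectors of the prescribed squared norms inside it; this is possible precisely because $k(L-1)+M < Lk$, equivalent to $M<k$.

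The main conceptual point, rather than a real obstacle, is recognizing that the three hypotheses $k\ge N/2$, $N=kL$, and $M<k$ correspond exactly to the dimension inequalities $N-k\le k$, $N-k\le (L-1)k$, and $N-k<Lk$ needed to embed the required orthogonal system into the kernel of the appropriate block-sum map. Once that dictionary is in place, all three parts reduce to straightforward linear algebra.
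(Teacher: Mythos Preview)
Your argument is correct, and it is a genuinely different route from the paper's. The paper proceeds constructively: it first invokes Lemma~\ref{lem1} (unitary conjugation preserves $\sum_i\PP_i^{*}\PP_i=\lambda I$) to replace an arbitrary $W$ by an explicit ``staircase'' subspace such as $W=\mathrm{span}\{e_i+e_{k+i}+\cdots+e_{(L-1)k+i}:1\le i\le k\}$, and then uses Corollary~\ref{cor1} to read off the diagonal of each $\PP_j^{*}\PP_j$ when $\PP_j$ projects along a block $\{e_i\}_{i\in K_j^c}$ of coordinate axes. The three cases are handled by three separate explicit choices of $W$ and of the partition $\{K_j\}$.

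Your block-matrix parametrization $\PP=\left(\begin{smallmatrix}I_k & C\\ 0 & 0\end{smallmatrix}\right)$ replaces all of that machinery by a single reduction: the target identity becomes $\sum_i C_i=0$ together with $\sum_i C_i^{*}C_i$ equal to a prescribed diagonal, and stacking turns this into the question of fitting $N-k$ orthogonal columns of given norms inside $\ker(e^{T}\otimes I_k)$. This unifies the three parts under one dimension count and makes transparent \emph{why} the hypotheses $k\ge N/2$, $N=kL$, $M<k$ are exactly what is needed. The trade-off is that the paper's proof hands you the projections explicitly (useful if one wants concrete examples, as in the paper's Examples~1 and~2), whereas yours establishes existence abstractly; to extract actual $\PP_i$ from your argument one still has to write down an orthonormal basis of the kernel and unstack. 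For part~(3) you also choose a different witness subspace ($W=\mathrm{span}\{e_1,\ldots,e_k\}$) than the paper does, which is fine since the statement only asks for existence.
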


\begin{remark}
The problem with part (3) of the Proposition is that we can't move this back
to an arbitrary subspace $W$ since we are getting $\sum_{i\in I}\PP_i^*\PP_i$ a diagonal operator
here and
\[ U^*\left ( \sum_{i\in I} \PP_i^* \PP_i \right ) U= \sum_{i\in I}U^*\PP_i^*\PP_iU,\]
is not a diagonal operator.
\end{remark}

Before we prove the proposition, we give some simple examples to
show how the proof will work.
\vskip12pt
\noindent {\bf Example 1}:  There is a subspace $W$ in $\cH_3$
with dim $W$ =2 and two (non-orthogonal) projections $\PP_W$ and $\QQ_W$
giving a 2-tight fusion frame for $\cH_3$.
We also know \cite{CFMWZ10} that there is no tight fusion frame
for $\cH_3$ made from orthogonal projections and two, $2$-dimensional
subspaces.   Moreover,
the example above is "unique" in that the only way to produce
a 2-tight (non-orthogonal) fusion frame out of projections $P$
with $\PP^{*}\PP$ diagonal is to produce the above example up to applying a unitary operator.
\vskip12pt

To do this, we consider the 2-dimensional subspace of $\cH_3$ given by:
\[ W_1 = span\ \{(1,0,0),(0,\frac{1}{\sqrt{2}},\frac{1}{\sqrt{2}})\}.\]
Now, by our Corollary \ref{cor1}, if we project onto $W$ along $e_3$ with
$\PP_W$, then $\PP_W^{*}\PP_W$ will have eigenvectors $\{e_n\}_{n=1}^3$
with respective eigenvalues $\{1,2,0\}$ and if we project onto the subspace
$W$ along $e_2$ with $\QQ_W$, then $\QQ_W^{*}\QQ_W$ has eigenvectors
$\{e_n\}_{n=1}^3$ with respective eigenvalues $\{1,0,2\}$.  So
\[ \PP_W^{*}\PP_W + \QQ_W^{*}\QQ_W = 2I.\]
This example is unique since if we pick any two subspaces $W_1,W_2$ of
$\cH_3$ with dim $W_i$= 2 and choose any projections $\PP_{W_1},\PP_{W_2}$,
to get diagonal operators $\PP_{W_i}^{*}\PP_{W_i}$, then each projection must have
a unit vector in its span and be projecting along another unit vector.  Hence,
all you can get for eigenvalues is $\{1,a_1,0\}$ and $\{1,0,a_2\}$ and
we know that $a_1,a_2\ge 1$ and these two sets of 3 eigenvalues must be
arranged so their respective sums are all the same.  Checking cases we can
easily see that this only happens if the eigenvalues are lined up as they are
and hence $a_1=a_2 = 2$ and backtracking through the Corollary, we get
exactly our example back.
\vskip12pt

We give one more example which illustrates how the general case will go.
\vskip12pt

\noindent {\bf Example 2}:  In $\cH_{2N}$ let
\[ W = span\ \ \{e_i+e_{N+i}: 1\le i \le N\}.\]
Then dim $W$ = N (i.e. half the dimension of the space).
We will construct two projections $\PP_1,\PP_2$ so that
\[ \PP_1^{*}\PP_1 + \PP_2^{*}\PP_2 = 2I.\]
To do this let $\PP_1$ be the projection onto $W$ along $\{e_1,e_2,\ldots,e_N\}$
and $\PP_2$ the projection along $\{e_{N+1},e_{N+2},\ldots,e_{2N}\}$.  Then
by Corollary \ref{cor1} we have $\PP_{i}^{*}\PP_i$ has eigenvectors
$\{e_i\}_{i=1}^{2N}$ with non-zero eigenvalues $2$ for each and on
$\{e_{N+i}\}_{i=1}^N$ for $\PP_1^{*}\PP_1$ and $\{e_{i}\}_{i=1}^N$ for $\PP_2^{*}\PP_2$.
i.e.  $\PP_1^{*}\PP_1+\PP_2^{*}\PP_2 = 2I$.
\vskip12pt

For the proof of the Proposition we need a simple lemma.

\begin{lemma}\label{lem1}
Let $\{W_i\}_{i\in I}$ be subspaces of a Hilbert space $\cH$.
Assume there exists a unitary operator $U$
on $\cH$ and projections $\{\PP_{W_i}\}_{i\in I}$ onto the spaces $UW_i$
so that $\sum_{i\in I}\PP_{W_i}^{*}\PP_{W_i} = \lambda I$. Then
$\{U^{*}\PP_{W_i}U\}_{i\in I}$ is a family of projections onto $\{W_i\}_{i\in I}$
satisfying $\sum_{i\in I}U^{*}\PP_{W_i}^{*}\PP_{W_i}U = \lambda I$.
\end{lemma}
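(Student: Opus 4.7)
The plan is to exploit the two algebraic facts about a unitary $U$: the adjoint identity $(AU)^* = U^*A^*$ together with $UU^* = U^*U = I$, and the fact that conjugation by $U$ carries projections to projections while transporting ranges. The argument is really just bookkeeping, so I would present it in two short stages.

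First, I would verify that each $\cR_i := U^*\PP_{W_i}U$ is a projection onto $W_i$. Idempotence is immediate:
\[
\cR_i^2 = U^*\PP_{W_i}UU^*\PP_{W_i}U = U^*\PP_{W_i}^2 U = U^*\PP_{W_i}U = \cR_i.
\]
For the range, note that since $\PP_{W_i}$ maps onto $UW_i$, for any $f\in\cH$ we have $\PP_{W_i}Uf\in UW_i$, hence $U^*\PP_{W_i}Uf\in U^*UW_i = W_i$; conversely if $w\in W_i$ then $Uw\in UW_i$ so $\PP_{W_i}Uw = Uw$ and therefore $\cR_i w = U^*Uw = w$, which shows $\cR_i$ is surjective onto $W_i$ and fixes $W_i$ pointwise.

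Second, I would compute $\cR_i^*\cR_i$. Using $(U^*\PP_{W_i}U)^* = U^*\PP_{W_i}^*U$ and $UU^* = I$, we get
\[
\cR_i^*\cR_i = U^*\PP_{W_i}^*U\,U^*\PP_{W_i}U = U^*\PP_{W_i}^*\PP_{W_i}U.
\]
Summing over $i\in I$ (and pulling $U^*$ and $U$ out of the sum, which is justified pointwise on $\cH$ by the assumed bounded convergence $\sum_i \PP_{W_i}^*\PP_{W_i} = \lambda I$) yields
\[
\sum_{i\in I}\cR_i^*\cR_i = U^*\Bigl(\sum_{i\in I}\PP_{W_i}^*\PP_{W_i}\Bigr)U = U^*(\lambda I)U = \lambda U^*U = \lambda I,
\]
which is the desired identity.

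There is no genuine obstacle here; the only mild care needed is in exchanging the infinite sum with conjugation by $U$ and in being clean about the range identification $U^*(UW_i) = W_i$. Both follow from unitarity alone, so the lemma reduces to the two displayed computations above.
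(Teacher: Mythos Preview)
Your proof is correct and follows essentially the same approach as the paper: verify idempotence of $U^*\PP_{W_i}U$ via $UU^*=I$, identify the range as $W_i$, and then conjugate the sum by $U$. You are in fact more careful than the paper, spelling out the range computation and the identity $\cR_i^*\cR_i = U^*\PP_{W_i}^*\PP_{W_i}U$ explicitly.
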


\begin{proof}
Since $U$ is unitary, $U^{*}$ is a unitary operator taking $UW_i$ onto $W_i$.
Also,
\[ U^{*}\PP_{W_i}U(U^{*}\PP_{W_i}U) = U^{*}\PP_{W_i}(UU^{*}\PP_{W_i}U
= U^{*}\PP_W\PP_WU = U^{*}\PP_{W_i}U.\]
That is, $U^*\PP_{W_i}U$ is a projection onto $W_i$.
Finally,
\[
\sum_{i\in I}U^{*}\PP_{W_i}^{*}\PP_{W_i}U = U^{*}\left ( \sum_{i\in I}\PP_{W_i}^{*}\PP_{W_i}\right )
= U^{*}\lambda IU=  U^{*}U\lambda I= \lambda I.\]
\end{proof}

\noindent {\bf Proof of Proposition \ref{prop3}}:
Let dim $W =k$.  We have to look at the three cases.
\vskip12pt
\noindent {\bf Case 1}:  We have $k \geq \frac{N}{2}$.
\vskip12pt
By Lemma \ref{lem1}, we may assume that our fixed subspace $W$ is
\[ W = span\ \ [ \{e_i+e_{k+i}\}_{i=1}^{N-k} \cup \{e_i\}_{i=N-k+1}^k] \]
By Corollary \ref{cor1}, if we project with $\PP_1$ onto $W$ along $\{e_i:i=1,2,\ldots,N-k\}$
we have that $\PP_1^*\PP_1$ has eigenvectors $\{e_i\}_{i=1}^N$ with respective
eigenvalues ``1" for $\{e_i\}_{i=N-k+1}^k$, ``2" for $i=1,2,\ldots,N-k$, and ``0"
otherwise.  Let $\PP_2$ be the projection along $\{e_i:i= k+1,k+2,\ldots,N\}$.  Then
by Corollary \ref{cor1} $\PP_2^{*}\PP_2$ has eigenvectors $\{e_i\}_{i=1}^N$ with
eigenvalues ``1" for $\{e_i\}_{i=N-k+1}^k$, ``2" for $i=k+1,k+2,\ldots,N$ and
``0"  otherwise.  Hence,
\[ \PP_1^{*}\PP_1+\PP_2^{*}\PP_2 = 2I.\]
\vskip12pt
\noindent {\bf Remark}:  It is worthwhile to note an important property of
the projections we constructed.
By Corollary \ref{cor1}, we have that
\[ \PP_1 e_i = \begin{cases} 0 & \mbox{if}\ \ i=k+1,k+2,\ldots,N\\
 e_i & \mbox{if}\ \ \ i=\lfloor \frac{N}{2}\rfloor, \lfloor \frac{N}{2}\rfloor+1,\ldots,k
 \end{cases} \]
and
\[\PP_2 e_i = \begin{cases} 0 & \mbox{if}\ \ i = 1,2,\ldots,N-k\\
e_i & \mbox{if}\ \  i=\lfloor \frac{N}{2}\rfloor, \lfloor \frac{N}{2}\rfloor+1,\ldots,k
\end{cases}\]
This property carries over to our original subspace $W$ since we can let
\[ V = span\  \{e_i:i=\lfloor \frac{N}{2}\rfloor, \lfloor \frac{N}{2}\rfloor+1,\ldots,k\}.\]
Now we need to see that this works when we use Lemma \ref{lem1}.  But this
is really immediate.  Our original subspace is now $U^*W$ and our projections
are $U^*\PP_1U,U^*\PP_2U$, and so
\[ U^*\PP_1UU^*\PP_2U = U^*\PP_1\PP_2U.\]
\vskip12pt

\noindent {\bf Case 2}:  We have $N=kL$.
\vskip12pt
By Lemma \ref{lem1}, we may assume that our subspace $W$ is:
\[ W = span\ \{e_i+e_{k+i}+e_{2k+i} + \cdots e_{(L-1)k+i}:i=1,2,\ldots,k\} .\]
For $j=1,2,\ldots,L$, let $K_j = \{i=(j-1)k+1,(j-1)k+2,\ldots, jk\}$ and let
$\PP_j$ be the projection onto $W$ along $\{e_i\}_{i\in K_j^c}$.
Then by Corollary \ref{cor1}, $\PP_j^{*}\PP_j$ has eigenvectors $\{e_i\}_{i=1}^N$
with eigenvalues "0" for $i\in K_j^c$ and eigenvalues "L" for $i\in K_j$.
Since
\[ \cup_{j=1}^L K_j = \{1,2,\ldots,N\},\]
and the sets $\{K_j\}_{j=1}^L$ are disjoint, it follows that
 \[ \sum_{i=1}^{L}\PP_i^{*}\PP_i = LI.\]
  \vskip12pt

\vskip12pt


\noindent {\bf Case 3}:  We have $N=kL+M$, $1\le M <k$.
\vskip12pt
We define the subspace $W$ by:
\[ W = span \ \left [ \{\sum_{j=0}^{L}e_{jk+i}:i=1,2,\ldots,M\}\cup
\{\sum_{j=0}^{L-1}e_{jk+i}: j=M+1,M+2,\ldots,k\}\right ] \]
For $j=1,2,\ldots,L$, let $K_j = \{i=(j-1)k+1,(j-1)k+2,\ldots, jk\}$,
and let $K_{L}=\{N-kL+1,N-kL+2,\ldots,N\}$.
Then we get the result by projecting onto $W$ along the sets
$\{K_j^c\}_{j=1}^{k}$.

%
%
%
%
%
%
%
%
%
%
%
%
%
%

\subsection*{Concluding Remarks}
Non-orthogonal fusion frames are clearly natural extensions of orthogonal fusion frames previously introduced \cite{CK04}, \cite{CKL08}.  With non-orthogonal fusion frames, not only can we always make the (new) fusion frame operator $\cSw$ sparse, but also sometimes enable $\cSw$ to become diagonal or tight.  In sensor network data fusion applications, non-orthogonal fusion frames is seen as a flexible tool to resolve the non-sparse nature of the orthogonal fusion frames operator since sensor subspaces and their relationships are given a priori by the sensor physics and the deployment of sensors. Sparsity considerations through non-orthogonal fusion frames seems to be the only effective approach in such applications.  The implementation of the non-orthogonal projections through pseudoframes for subspaces are also discussed in detail.

It is also seen that the flexibility of non-orthogonal fusion frames brings in rather unique and a broad range of properties to the notion of fusion frames.  Our on-going subsequent work includes multi-fusion frame constructions with diagonal or tight $\cSw$, complete tight fusion frame constructions based on one (proper) subspace, classification of positive and self-adjoint operators by projections, and non-orthogonal fusion frames analysis for a given set of subspaces (such as in sensor networks) so that $\cSw$ is either diagonal or tight.  This last task is an ultimate goal.


\end{document}